\begin{document}
\newtheorem{theorem}{Theorem}[section]
\newtheorem{lemma}[theorem]{Lemma}
\newtheorem{definition}[theorem]{Definition}
\newtheorem{conjecture}[theorem]{Conjecture}
\newtheorem{proposition}[theorem]{Proposition}
\newtheorem{algorithm}[theorem]{Algorithm}
\newtheorem{corollary}[theorem]{Corollary}
\newtheorem{observation}[theorem]{Observation}
\newtheorem{problem}[theorem]{Open Problem}
\newtheorem{remark}[theorem]{Remark}
\newcommand{\noin}{\noindent}
\newcommand{\ind}{\indent}
\newcommand{\om}{\omega}
\newcommand{\I}{\mathcal I}
\newcommand{\pp}{\mathcal P}
\newcommand{\ppp}{\mathfrak P}
\newcommand{\N}{{\mathbb N}}
\newcommand{\LL}{\mathbb{L}}
\newcommand{\R}{{\mathbb R}}
\newcommand{\E}{\mathbb E}
\newcommand{\Prob}{\mathbb{P}}
\newcommand{\eps}{\varepsilon}

\title{Metric dimension for random graphs}

\author{B\'ela Bollob\'as}
\address{Department of Pure Mathematics and Mathematical Statistics,
Wilberforce Road, Cambridge, CB3 0WA, UK, and Department of
Mathematical Sciences, University of Memphis, Memphis, TN 38152, USA,
and London Institute for Mathematical Sciences, 35a South Street,
London, W1K 2XF, UK.}
\email{\tt B.Bollobas@dpmms.cam.ac.uk}

\author{Dieter Mitsche}
\address{Universit\'{e} de Nice Sophia-Antipolis, Laboratoire J-A Dieudonn\'{e}, Parc Valrose, 06108 Nice cedex 02}
\email{\texttt{dmitsche@unice.fr}}

\author{Pawe\l{} Pra\l{}at}
\address{Department of Mathematics, Ryerson University, Toronto, ON, Canada}
\email{\tt pralat@ryerson.ca}

\keywords{random graphs, metric dimension, diameter}
\thanks{The first author was partially supported by NSF grant DMS-0906634 and EU MULTIPLEX grant 317532. The second and third author gratefully acknowledge support from NSERC, MPrime, and Ryerson University.}
\subjclass{05C12, 05C35, 05C80}

\maketitle

\begin{abstract}
The metric dimension of a graph $G$ is the minimum number of vertices in a subset $S$ of the vertex set of $G$ such that all other vertices are uniquely determined by their distances to the vertices in $S$. In this paper we investigate the metric dimension of the random graph $G(n,p)$ for a wide range of probabilities $p=p(n)$.
\end{abstract}

\section{Introduction}\label{sec:intro}
Let $G=(V,E)$ be a finite, simple, connected graph with $|V|=n$ vertices.
For a subset $R \subseteq V$ with $|R|=r$, and a vertex $v \in V$, define $d_R(v)$ to be the $r$-dimensional vector whose $i$-th coordinate $(d_R(v))_i$ is the length of the shortest path between $v$ and the $i$-th vertex of $R$. We call a set $R \subseteq V$  a \emph{resolving set} if for any pair of vertices $v, w \in V$, $d_R(v) \neq d_R(w)$. Clearly, the entire vertex set $V$ is always a resolving set, and so is $R=V\setminus \{z\}$ for every vertex $z$.   The \emph{metric dimension} $\beta(G)$ (or simply $\beta$, if the graph we consider is clear from the context) is then the smallest cardinality of a resolving set. We have the trivial inequalities $1 \leq \beta(G) \leq n-1$, with the lower bound  attained for a path, and the upper bound for the complete graph.

The problem of studying the metric dimension was proposed in the mid-1970s by Slater~\cite{Sla75}, and Harary and Melter~\cite{Har76}. As a start, Slater~\cite{Sla75} determined the metric dimension of trees. Two decades later, Khuller,  Raghavachari and Rosenfeld~\cite{Kul96} gave a linear-time algorithm for computing the metric dimension of a tree, and characterized the graphs with metric dimensions $1$ and $2$. Later on, Chartrand,  Eroh, Johnson and Oellermann~\cite{CEJO00} gave necessary and sufficient conditions for a graph $G$ to satisfy $\beta(G)=n-1$ or $\beta(G)=n-2$.

Denoting by $D=D(G)$ the diameter of a graph $G$, it was observed in~\cite{Kul96} that $n \leq D^{\beta-1}+\beta$. Recently, Hernando,  Mora,  Pelayo,  Seara and Wood~\cite{HMPSW10} proved that $n \leq (\lfloor \frac{2D}{3}\rfloor +1)^{\beta}+\beta \sum_{i=1}^{\lceil D/3 \rceil} (2i-1)^{\beta-1}$, and  gave extremal constructions that show that this bound was sharp. Moreover, in~\cite{HMPSW10} graphs of metric dimension $\beta$ and diameter $D$ were characterized.

The metric dimension of the cartesian product of graphs was investigated by C\'{a}ceres, Hernando et al.~\cite{Cac07}, and the relationship between $\beta(G)$ and the {\em determination number} of $G$ (the smallest size of a set $S$ such that every automorphism of $G$ is uniquely determined by its action on $S$) was studied by C\'{a}ceres, Garijo et al.~\cite{CGPS10}. Also, Bailey and Cameron~\cite{Bailey11} studied the metric dimension of groups, and the relationship of the problem of determining $\beta (G)$ to the graph isomorphism problem.

Concerning algorithms, the problem of finding the metric dimension is known to be NP-complete for general graphs (see~\cite{GJ79, Kul96}). Recently, D\'{\i}az et al.~\cite{Diaz12} showed that determining $\beta(G)$ is NP-complete for planar graphs, and gave a polynomial-time algorithm for determining the metric dimension of an outerplanar graph. Furthermore, in~\cite{Kul96} a polynomial-time algorithm approximating $\beta(G)$ within a factor  $2 \log n$ was given. On the other hand, Beerliova et al.~\cite{Beerliova} showed that the problem is inapproximable within $o(\log n)$ unless P=NP. Hauptmann et al.~\cite{Hauptmann} then strengthened the result and showed that unless NP $\subseteq$ DTIME$(n^{\log \log n})$, for any $\eps > 0$, there is no $(1-\eps) \log n$-approximation for determining $\beta(G)$, and finally Hartung et al.~\cite{Hartung12} extended the result by proving that the metric dimension problem is still inapproximable within a factor of $o(\log n)$ on graphs with maximum degree three.

\bigskip

In this paper, we consider the metric dimension of the classical binomial random graph $G(n,p)$. As usual 
(see, for example,~\cite{bol, JLR}), the space ${\mathcal G}(n,p)$ of random graphs is the probability triple $(\Omega, \mathcal{F}, \Prob)$ where $\Omega$ is the set of all graphs with vertex set $[n]=\{1,2,\dots,n\}$, \ $\mathcal{F}$ is the family of all subsets of $\Omega$, and  ${\Prob}$ is the probability measure on $(\Omega, \mathcal{F})$ defined by
$$
\Prob(G) = p^{|E(G)|} (1-p)^{{n \choose 2} - |E(G)|} \,.
$$
A random graph $G(n,p)$ is simply a random point of this space. Clearly, $G(n,p)$ can be obtained by
${n \choose 2}$ independent coin flips, one for each unordered pair of vertices, with probability of `success' $p$: if the flip corresponding to a pair $(x,y)$ is `success', then we join $x$ to $y$, otherwise we do not join them. We shall take $p=p(n)$ to be a function of $n$; in particular, $p$ may tend to zero as $n$ tends to infinity. All asymptotics throughout are as $n \rightarrow \infty $. We say that an assertion concerning $G(n,p)$ holds \emph{asymptotically almost surely} (\emph{a.a.s.}) if the probability that it holds tends to $1$ as $n$ goes to infinity.

\bigskip

As far as we know, not much is known about the metric dimension of $G(n,p)$. Babai et al.~\cite{Babai80} showed that in $G(n,1/2)$ a.a.s.\ the set of $\lceil (3\log n)/\log 2 \rceil$ vertices with the highest degrees can be used to test whether two random graphs are isomorphic (in fact, they provided an $O(n^2)$ algorithm to do it), and hence they obtained an upper bound of $\lceil (3\log n)/\log 2 \rceil$ for the metric dimension of $G(n,1/2)$ that holds a.a.s. Frieze et al.~\cite{Frieze07} studied sets resembling resolving sets, namely \emph{identifying codes}: a set $C \subseteq V$ is an identifying code of $G$, if $C$ is a dominating set (every vertex $v \in V \setminus C$ has at least one neighbour in $C$) and $C$ is also a separating set (for all pairs $u,v \in V$, one must have $N[u] \cap C \neq N[v] \cap C$, where $N[u]$ denotes the closed neighbourhood of $u$). Observe that a graph might not have an identifying code, but note also that for random graphs with diameter $2$ the concepts are very similar. The existence of identifying codes and bounds on their sizes in $G(n,p)$ were established in~\cite{Frieze07}.  The same problem in the model of random geometric graphs was analyzed by M\"uller and Sereni~\cite{Mueller09}, and Foucaud and Perarnau~\cite{Foucaud12} studied the same problem in random $d$-regular graphs.

\bigskip

Let us collect our results into a single theorem covering all random graphs with expected average degree $d=pn(1+o(1)) \gg \log^5 n$ and expected average degree in the complement of the graph $(n-1-d) = (1-p)n(1+o(1)) \ge (3 n \log \log n)/\log n$. In later sections we shall prove slightly stronger results for specific ranges of $p$. For a visualization of the behaviour of $\log_n \beta(G(n,n^{x-1}))$ see also Figure~\ref{fig1}(a) and the description right after the statement of the theorem. 

The intuition behind the theorem is the following: if a random graph is sufficiently dense, then the graph locally (that is, ``observed'' from a given vertex) ``looks'' the same. In other words, the cardinality of the set of vertices at a certain graph distance from a given vertex $v$ does not differ much for various $v$. After grouping the vertices according to their graph distances from $v$, it turns out that for the metric dimension the ratio between the sizes of the two largest groups of vertices is of crucial importance. If these two groups are roughly of the same size, then a typical vertex added to the resolving set distinguishes a lot of pairs of vertices, and hence the metric dimension is small. If, on the other hand, these two groups are very different in size, a typical vertex distinguishes those few vertices belonging to the second largest group from the rest. The number of other pairs that are distinguished is negligible and hence the metric dimension is large. 

It is clear that this parameter is non-monotonic. Let us start with a random graph with constant edge probability $p$. For each vertex $v$ in the graph, a constant fraction of all vertices are neighbours of $v$ and a constant fraction of vertices are non-neighbours. When decreasing $p$, the number of neighbours decreases, and some vertices will appear at graph distance $3$. As a result, the metric dimension increases. Continuing this process, the number of vertices at graph distance $3$ increases more and more, and at some point this number is comparable to the number of vertices at graph distance $2$. Then, the metric dimension is small again, and the same phenomenon appears in the next iterations.

\bigskip

The precise statement is the following.

\begin{theorem}\label{thm:main}
Suppose that
$$
\log^5 n \ll d=p(n-1) \le n \left( 1 - \frac{3 \log \log n}{\log n} \right).
$$
Let $i \ge 0$ be the largest integer such that $d^i = o(n)$, let $c = c(n) = d^{i+1}/n$, and let
$$
q=
\begin{cases}
(e^{-c})^2+(1-e^{-c})^2 & \text{ if } p = o(1) \\
p^2 + (1-p)^2 & \text{ if } p = \Theta(1).
\end{cases}
$$
For $i \ge 1$, let $\eta = \log_n d^i $. Finally, let $G = (V,E) \in G(n,p)$. Then, the following assertions hold a.a.s.
\begin{itemize}
\item [(i)] If $c = \Theta(1)$, then
$$
\beta(G) = (1+o(1)) \frac {2 \log n}{\log (1/q)} = \Theta(\log n).
$$
\item [(ii)] If $c \to \infty$ and $e^c \le (\log n)/(3 \log \log n)$, then
$$
\beta(G) =(1+o(1)) \frac{2\log n}{\log (1/q)}=(1+o(1))e^c \log n \gg \log n.
$$
\item[(iii)] If $e^c > (\log n)/(3 \log \log n)$, then
$$
\left( \eta +o(1) \right) \left( \frac {d^i}{n} + e^{-c} \right)^{-1} (\log n) \le \beta(G)  \le (1+o(1)) \left( \frac {d^i}{n} + e^{-c} \right)^{-1} (\log n).
$$
In particular,
\[
\beta(G) =
\begin{cases}
\Theta(e^c \log n)=\Theta(\frac{\log n}{\log (1/q)})  & \text{ if } e^{-c} = \Omega( d^i / n ) \\
\Theta( \frac {n \log n}{d^i})  & \text{ if } e^{-c} \ll d^i / n ,
\end{cases}
\]
and hence in all cases we have $\beta(G) \gg \log n.$
\end{itemize}
\end{theorem}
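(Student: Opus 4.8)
\emph{Reformulation.} The plan is to read $\beta(G)$ as a hitting-set parameter. For distinct $x,y\in V$ write $W(x,y)=\{u\in V: d(u,x)\neq d(u,y)\}$ for the set of vertices that \emph{resolve} the pair. A set $R$ is resolving exactly when it meets every $W(x,y)$, so $\beta(G)$ is the minimum size of a transversal of the family $\{W(x,y)\}_{x\neq y}$. Everything reduces to one quantity, the per-landmark non-resolving probability $\rho=\Prob_u[d(u,x)=d(u,y)]=1-|W(x,y)|/n$. The guiding computation is that a.a.s.\ $|W(x,y)|/n=2(d^i/n+e^{-c})(1+o(1))$ for essentially all pairs, so $\log(1/\rho)=2(d^i/n+e^{-c})(1+o(1))$ and $\frac{2\log n}{\log(1/\rho)}=(1+o(1))(d^i/n+e^{-c})^{-1}\log n$. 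In regimes (i) and (ii) one checks $d^i/n\ll e^{-c}$, so this collapses to $(1+o(1))\,2\log n/\log(1/q)$ for the $q$ of the statement; in regime (iii) either term may dominate, which produces the two sub-cases.

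\emph{Distance profile.} First I would pin down the local geometry: a.a.s., simultaneously for every vertex $u$, the breadth-first sphere sizes satisfy $|S_k(u)|=(1+o(1))d^k$ for all $k\le i$, the distance-$(i{+}1)$ class has size $(1-e^{-c}+o(1))n$, and the vertices at distance $\ge i+2$ number $(e^{-c}+o(1))n$. The engine is a Chernoff bound for the number of neighbours a growing shell sends out, iterated over the $i+1=O(1)$ shells; the hypothesis $d\gg\log^5 n$ is precisely what makes the per-vertex failure probability survive a union bound over the $n$ centres, and later over pairs. From this one reads off, for fixed distinct $x,y$ and a fixed landmark $u$, that $d(u,x)=i+1$ with probability $1-e^{-c}+o(1)$ and $d(u,x)\ge i+2$ with probability $e^{-c}+o(1)$, the events for $x$ and $y$ being conditionally independent given $S_i(u)$ (their edges into $S_i(u)$ are disjoint). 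A bounded-differences (edge-exposure) argument then upgrades this to uniform concentration of $|W(x,y)|$ for all but an $o(n^{-2})$ fraction of pairs.

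\emph{Upper bound.} Here the clean move is to condition on the a.a.s.\ event that the distance profile is typical and that $|W(x,y)|\ge w_0:=(1-o(1))\cdot 2(d^i/n+e^{-c})n$ for every pair, and only then to choose $R$ uniformly at random of size $r=(1+o(1))(d^i/n+e^{-c})^{-1}\log n$. With the graph frozen, each $W(x,y)$ is a fixed set and $R$ a uniform $r$-subset of a fixed ground set, so $\Prob_R[R\cap W(x,y)=\emptyset]=\binom{n-|W(x,y)|}{r}\big/\binom{n}{r}\le(1-w_0/n)^r$ \emph{exactly} --- the dependence between landmarks, otherwise painful, has been removed by fixing $G$ first. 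A union bound gives $\E_R[\#\text{unresolved pairs}]\le\binom{n}{2}(1-w_0/n)^r\to 0$ for this $r$, so a resolving set of the stated size exists a.a.s., proving all three upper bounds at once.

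\emph{Lower bound and the main obstacle.} For the lower bounds one must show that no $R$ of the stated smaller size resolves $G$. Since a fixed $R$ resolves iff the map $v\mapsto(d(u,v))_{u\in R}$ is injective, I would bound $\Prob_G[\exists\,R,\ |R|=r,\ R\text{ resolving}]\le\binom{n}{r}\,\Prob_G[\text{a fixed }R\text{ is injective}]$ and force the right-hand side to $0$. The substance is a lower estimate on the number $Z$ of \emph{colliding} pairs (those with equal distance-vector) together with a second-moment bound making $\Prob_G[Z=0]$ extremely small. This is the crux, for two intertwined reasons: collisions cluster transitively --- if $x,y$ and $x,z$ collide then so do $y,z$, so one over-represented distance-vector value contributes a whole clique of correlated collisions --- and, across different centres, the indicators $\mathbb{1}[d(u,x)=d(u,y)]$ are coupled through the shared breadth-first structure (a vertex in $S_i(u)\cap S_i(u')$ fixes $x$'s distance to both $u$ and $u'$ simultaneously). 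In regimes (i) and (ii) the required $r$ is only $O(\log^2 n)$ and each such overlap is an $o(1/r)$ fraction of a sphere, so the second moment is dominated by its diagonal and the bound is tight, giving $\beta\ge(1-o(1))\,2\log n/\log(1/q)$. In regime (iii) the required $r$ is as large as $n\log n/d^i$, the factor $\binom{n}{r}\approx e^{r\eta\log n}$ in the union bound is enormous, and the clustering no longer lets $\Prob_G[Z=0]$ beat it by the full margin; the best control I can see costs exactly the factor $\eta=\log_n d^i$ and yields $(\eta+o(1))(d^i/n+e^{-c})^{-1}\log n$. Closing this $\eta$ gap --- showing collisions stay sufficiently spread out despite the clustering --- is the step I expect to resist a routine argument, and is presumably why the theorem asserts only matching $\Theta$-orders, rather than matching constants, in case (iii).
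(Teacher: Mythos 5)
Your reformulation and your upper-bound argument are sound and essentially the paper's own (Lemma 2.1 supplies the shell concentration, and Theorem 3.1 runs exactly your ``freeze $G$, pick $R$ uniformly at random, union bound over pairs'' scheme). The genuine gap is in the lower bounds, which is where all the work lies. In regimes (i) and (ii) your plan --- ``a second-moment bound making $\Prob_G[Z=0]$ extremely small'', with the second moment ``dominated by its diagonal'' --- cannot work as stated, for a quantitative reason: Chebyshev gives $\Prob(Z=0)\le \mathrm{Var}(Z)/\E[Z]^2$, which is never smaller than roughly $1/\E[Z]$; at the critical size $r=(2-\eps)\log n/\log(1/q)$ with $\eps=3\log\log n/\log n$ one has $\E[Z]=\Theta(n^2 q^r)=\Theta(n^{\eps})=\Theta(\log^3 n)$, so a variance argument yields at best $\Prob(Z=0)=O(\log^{-3}n)$, while the union bound over $\binom{n}{r}\le \exp\left(O(\log^3 n/\log\log n)\right)$ candidate sets requires something like $\exp(-\Omega(\log^3 n))$ --- exponentially small in $\E[Z]$, not polynomially. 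The paper obtains exactly this from Suen's correlation inequality, $\Prob(X=0)\le\exp(-\mu+\Delta e^{2\delta})$, applied to the dependency graph on colliding pairs; and the verification that the $\Delta$-term does not swamp $\mu$ is precisely where the hypothesis $e^{c}\le(\log n)/(3\log\log n)$ enters (it forces $1-q\ge(2+o(1))\eps$, making the exponent $\frac{1-q}{2q}\cdot\frac{2-\eps}{\log(1/q)}\ge 1+\eps/2$, which kills the correction term $n\left(1-\frac{1-q}{2q}\right)^{r}$). You never use or explain this boundary of regime (ii), a sign the mechanism is absent from your sketch: the needed idea is an exponential correlation inequality of Suen/Janson type, not a second moment.

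In regime (iii) you offer no argument, and the obstacle you anticipate (clustering of collisions degrading $\Prob[Z=0]$) is not how the paper proceeds. Instead (Theorems 4.3 and 4.4) it shows directly that a.a.s.\ \emph{every} set $R$ of size $r=(\eta-o(1))\left(d^{i}/n+e^{-c}\right)^{-1}\log n$ leaves at least two vertices at distance exactly $i+1$ from every vertex of $R$, and such vertices are indistinguishable: one bounds $|N(R,i-1)|$ via Lemma 2.1, estimates the probability $p_{\ell}$ that a vertex avoids the $i$-th neighbourhood of $R$ and the probability $p_{v}$ that it sends an edge into every sphere $S(x,i)$, $x\in R$ (positive correlation of the events $\{v\in S(x,i+1)\}$ justifies the product bound despite overlapping spheres), then applies Chernoff to get per-set failure probability $\exp(-n^{1-\eta'+o(1)}\log^3 n)$ with $\eta'$ tuned so this beats $n^{r}=\exp(r\log n)=\exp(O(n^{1-\eta}\log^2 n))$, using the diameter results (Lemma 4.1) to confine all coordinates to $\{i+1,i+2\}$. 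The factor $\eta=\log_n d^i$ (which equals $i/(i+1)+o(1)$ when $c\le 3\log n$) is thus the outcome of an explicit optimization in this union bound, not a loss to correlation among collision indicators; incidentally, $\binom{n}{r}\le e^{r\log n}$, not $e^{r\eta\log n}$ as you wrote. A smaller concern: your bounded-differences step for uniform concentration of $|W(x,y)|$ is delicate, since changing a single edge can move $\Theta(d^{i-1})$ distances; the paper avoids this entirely by iterating Chernoff over BFS shells with per-event failure probability $o(n^{-2})$ and a union bound over pairs.
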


\begin{remark} 
Note that here and in the following results, it follows from the definition of $i$ that $c=d^{i+1}/n=\Omega(1)$. Furthermore, 
$$
\eta =\log_n d^i \ge \frac {i}{i+1}+o(1) \ge \frac 12+o(1), 
$$
where the first inequality follows from the fact that 
$$
\log_n d^i = \frac {\log d^i} {\log n} = \frac {\log d^i}{\log (d^{i+1}/c)} = \frac {i \log d}{ (i+1) \log d - \log c} \ge \frac {i}{i+1} + o(1).
$$
\end{remark}

Observe that Theorem~\ref{thm:main} shows that $\beta(G)$ undergoes a ``zigzag'' behaviour as a function of $p$. It follows that a.a.s.\ $\log_n \beta(G(n,n^{x-1}))$ is asymptotic to the function $f(x)=1-x \lfloor 1/x \rfloor$ shown in Figure~\ref{fig1}(a). Indeed, for cases (i) and (ii) we have $c=n^{o(1)}$ (that is, $d=n^{(1+o(1))/i}$ for some $i \in \N$) and a.a.s.\ $\beta(G) = n^{o(1)}$.  This corresponds to a collection of points $(1/i, 0)$, $i \in \N$ in the figure. For ranges of $p$ considered in case (iii), we have that a.a.s.\ $\beta(G)$ is of order $\left( {d^i}/{n} + e^{-c} \right)^{-1}$. For $d = n^{x+o(1)}$, where $1/(i+1) < x < 1/i$ for some $i \in \N$, it follows that a.a.s.\ $\beta(G) = \Theta( n/d^i )= n^{1-ix+o(1)}$, which corresponds to linear parts of the function $f(x)$ of slope $1-ix$. The function $f(x)$ is hence not continuous at $x=i$, $i \in \N \setminus \{1\}$.

The result is asymptotically tight for sparse graphs (that is, for $d = n^{o(1)}$). The ratio between our upper and lower bound is at most $2+o(1)$ and follows another ``zigzag'' function $f(x)=(x \lfloor 1/x \rfloor)^{-1}$ shown in Figure~\ref{fig1}(b). Indeed, for cases (i) and (ii) we obtained an asymptotic behaviour of $\beta(G)$. This corresponds to a collection of points $(1/i, 0)$, $i \in \N$ in the figure. In case (iii) the ratio is asymptotic to $\eta^{-1}$. For $d = n^{x+o(1)}$, where $1/(i+1) < x < 1/i$ for some $i \in \N$, $\eta^{-1} = \eta^{-1}(x) \sim 1/(ix) \le (i+1)/i$. Hence, $\eta^{-1} \sim (x \lfloor 1/x \rfloor)^{-1}$.

\begin{figure}[h]
\begin{center}
\begin{tabular}{cc}
\includegraphics[width=2.5in,height=2in]{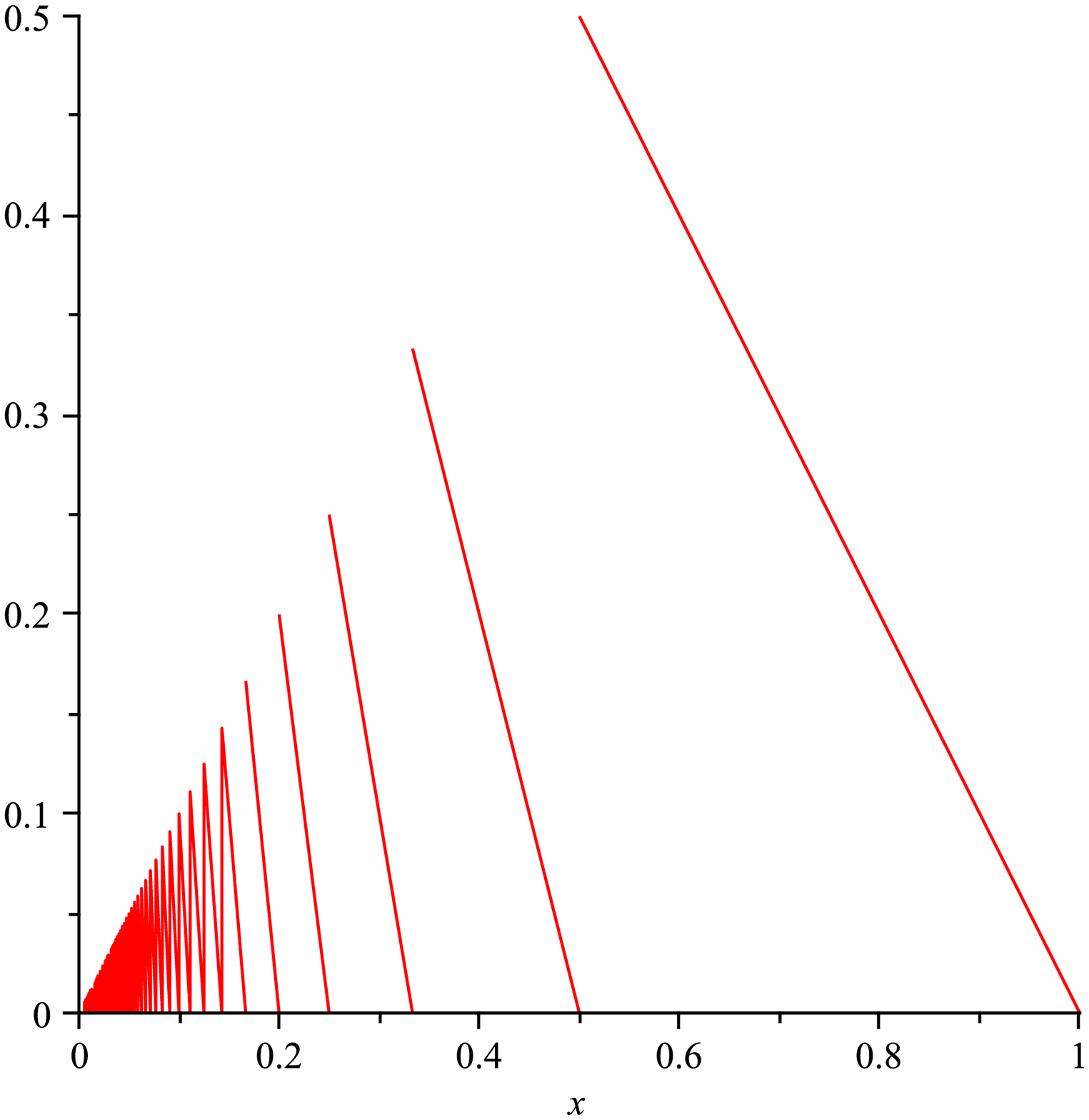} &
\includegraphics[width=2.5in,height=2in]{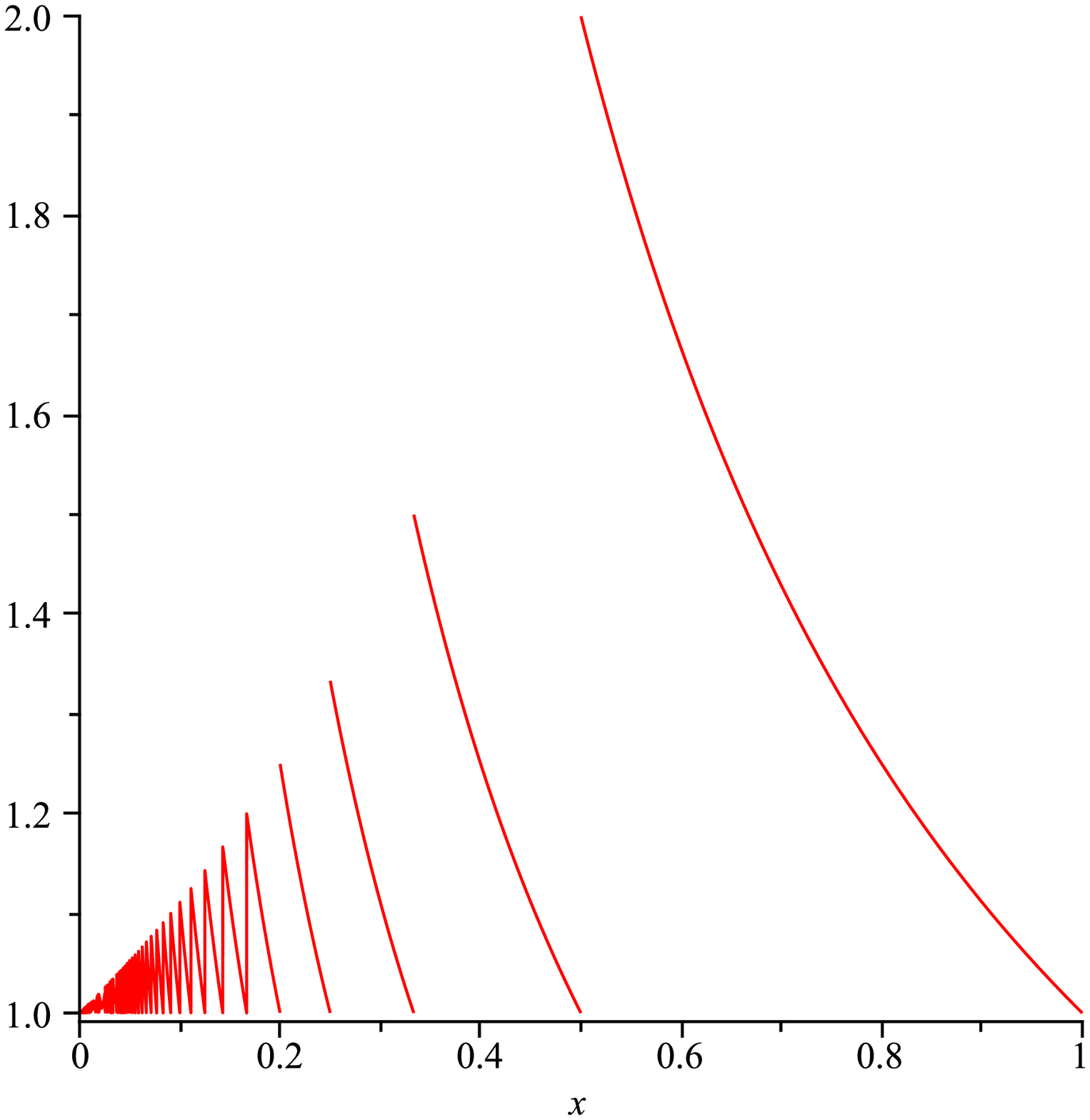} \\
(a) the `zigzag' function  & (b) the upper/lower bound ratio \\
$f(x) = 1-x \lfloor 1/x \rfloor$ & $\eta^{-1}(x) = (x \lfloor 1/x \rfloor)^{-1}$ \\
\end{tabular}
\end{center}
\caption{}\label{fig1}
\end{figure}

\section{Expansion properties}

Let us start with the following expansion-type properties of random graphs. For a vertex $v \in V$, let $S(v,i)$ and $N(v,i)$ denote the set of vertices at distance $i$ from $v$ and the set of vertices at distance at most $i$ from $v$, respectively. For any $V' \subseteq V$, let $S(V',i) = \bigcup_{v \in V'} S(v,i)$ and $N(V',i) = \bigcup_{v \in V'} N(v,i)$.

\begin{lemma}\label{lem:gnp exp}
Let $\omega=\omega(n)$ be a function tending to infinity with $n$ such that $\omega \le (\log n)^4 (\log \log n)^2$.  Then the following properties hold a.a.s. for $G(n,p)$.
\begin{itemize}
\item [(i)] Suppose that $\omega \log n \le d=p(n-1) = o(n)$. Let $V' \subseteq V$ with $|V'| \le 2$ and let $i \in \N$ such that $d^i = o(n)$. Then,
$$
\left| S(V',i) \right| = \left(1+O \left( \frac {1}{\sqrt{\omega}}\right) + O\left( \frac{d^i}{n} \right) \right) d^i |V'|.
$$
In particular, for every $x,y \in V$ ($x \neq y$) we have
$$
\left| S(x,i) \setminus S(y,i) \right| = \left(1+O \left( \frac {1}{\sqrt{\omega}}\right) + O\left( \frac{d^i}{n} \right) \right) d^i.
$$
\item [(ii)] Suppose that $(\omega \log^2 n)/(\log \log n) \le d=p(n-1) = o((n \log \log n) / (\log n)^2)$. Let $R \subseteq V$ with $r=|R| \le (\log n)^2 / (\log \log n)$, $x \in V \setminus R$, and let $i \in \N$ such that $r d^i = o(n)$. Then,
$$
\left| S(x,i) \setminus N(R,i) \right| = \left(1+O \left( \sqrt{ \frac {\log \log n}{\omega \log n}}\right) + O\left( \frac {1}{\omega} \right) + O\left( \frac{r d^i}{n} \right) \right) d^i.
$$
\end{itemize}
\end{lemma}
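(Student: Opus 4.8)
The plan is to analyse the breadth-first search (BFS) exploration from the source, revealing the edges of $G(n,p)$ layer by layer and applying a Chernoff bound at each layer, conditional on everything revealed so far. Write $s_k = |S(v,k)|$ and $m_k = |N(v,k)|$ for the BFS from a single vertex $v$. The engine of the whole argument is a one-step estimate: conditional on the already-explored part of the graph (which determines the frontier $S(v,k)$ and the visited set $N(v,k)$), the next sphere $S(v,k+1)$ is exactly the set of unvisited vertices having at least one neighbour in the frontier. These membership events are mutually independent over the $n-m_k$ unvisited vertices, since they involve pairwise disjoint sets of edges, and each has probability $1-(1-p)^{s_k}$; hence $|S(v,k+1)|$ is a sum of independent indicators. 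As $p s_k \approx d^{k+1}/n = o(1)$ throughout the relevant range, we have $1-(1-p)^{s_k} = p s_k (1 + O(d^{k+1}/n))$ and $n-m_k = n(1 + O(d^k/n))$, so that the conditional expectation of $|S(v,k+1)|$ equals $d\, s_k\,(1 + O(d^{k+1}/n))$.

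First I would settle the single-source case of (i) by induction on the layer index, maintaining the event that $s_j = (1+o(1))d^j$ holds with the claimed relative error for every $j \le k$. The conditional Chernoff bound then transfers this from level $k$ to level $k+1$: to absorb a union bound over all $n$ vertices and all $O(\log n)$ levels I require a per-step failure probability of about $n^{-2}$, which demands an \emph{additive} deviation of order $\sqrt{\mu\log n}$ around the mean $\mu \approx d^{k+1}$, i.e.\ a \emph{relative} fluctuation $\delta_k = O(\sqrt{\log n/d^{k+1}})$. The point that makes the final error as small as claimed is that both error streams telescope in opposite directions: the deterministic corrections sum to $\sum_k O(d^{k+1}/n) = O(d^i/n)$, a geometric series dominated by its \emph{last} term, whereas the random fluctuations sum to $\sum_k O(\sqrt{\log n/d^{k+1}}) = O(\sqrt{\log n/d}) = O(1/\sqrt{\omega})$, a geometric series dominated by its \emph{first} term (here $d \ge \omega\log n$ is used). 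Multiplying the per-step factors $1 + O(d^{k+1}/n) + \delta_k$ therefore gives $s_i = (1 + O(1/\sqrt{\omega}) + O(d^i/n))\,d^i$.

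For $|V'|=2$ I run the same exploration and write $|S(V',i)| = |S(x,i)| + |S(y,i)| - |S(x,i)\cap S(y,i)|$. The first two terms are each $(1+o(1))d^i$ by the single-source case, and a first-moment estimate shows that the two spheres meet in only $O(d^{2i}/n) = o(d^i)$ vertices (a fixed vertex lies in both with probability $O((d^i/n)^2)$), an error already contained in the $O(d^i/n)$ term. The same bound on the intersection yields the corollary, since $|S(x,i)\setminus S(y,i)| = |S(x,i)| - |S(x,i)\cap S(y,i)| = (1 + O(1/\sqrt{\omega}) + O(d^i/n))\,d^i$.

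Part (ii) uses the same skeleton but with a two-phase revelation. I fix $R$ and $x$ and first expose the balls $N(R,\cdot)$, obtaining a set $W = N(R,i)$ with $|W| \le \sum_{v\in R}|N(v,i)| = O(rd^i) = o(n)$; I then grow the BFS from $x$ on the untouched randomness. Since $R$ is \emph{fixed}, the union bound now ranges only over the $n$ choices of $x$ and the $O(\log n)$ levels, \emph{not} over the $\binom{n}{r}$ possible sets $R$; a failure probability of $n^{-2}$ then costs only a relative deviation $O(\sqrt{\log n/d})$, which in this regime ($d \ge \omega\log^2 n/\log\log n$) is exactly $O(\sqrt{\log\log n/(\omega\log n)})$, while the lower-order corrections to the expected layer sizes produce the $O(1/\omega)$ term. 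This gives the upper bound $|S(x,i)\setminus N(R,i)| \le |S(x,i)| = (1 + O(\sqrt{\log\log n/(\omega\log n)}) + O(1/\omega))\,d^i$, and for the lower bound it remains to show $|S(x,i)\cap N(R,i)| = O(rd^i/n)\,d^i$, i.e.\ that only a $O(rd^i/n)$ fraction of the sphere of $x$ falls into the fixed set $W$ of density $O(rd^i/n)$. The main obstacle is exactly this last interaction: I must verify that conditioning on $W$ and the edges incident to it leaves the edges \emph{within} $V\setminus W$ independent, so that the one-step Chernoff estimate still applies to fresh randomness, and that the pre-revealed edges between $x$'s exploration and $W$ can only decrease the intersection and hence cost at most the stated $(1+O(rd^i/n))$ factor --- all while keeping the accumulated error under control over the up to $\Theta(\log n/\log\log n)$ layers.
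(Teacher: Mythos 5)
Your single-source argument for part (i) --- layer-by-layer conditional Chernoff bounds, with the deterministic corrections $\sum_k O(d^{k+1}/n)$ dominated by the last term and the random fluctuations $\sum_k O(\sqrt{\log n/d^{k+1}})$ dominated by the first --- is exactly the paper's argument. But your handling of the pair case has a genuine gap: you estimate $|S(V',i)|$ by inclusion--exclusion and dispose of $|S(x,i)\cap S(y,i)|$ with ``a first-moment estimate.'' The lemma is needed, and proved in the paper, \emph{uniformly over all pairs} $x,y$ (it is invoked later as ``a.a.s.\ for every pair''), and a first moment of order $d^{2i}/n$ gives nothing after a union bound over $\binom{n}{2}$ pairs: the permitted absolute error $O(d^i/n)\cdot d^i = O(d^{2i}/n)$ is exactly the order of the expectation, so Markov leaves no slack at all. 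The paper avoids the intersection entirely by running the one-step growth estimate on the set $Z=V'$ itself, i.e.\ concentrating $|N(V',1)|\approx d|V'|$ directly via Chernoff and iterating; the pair statement $|S(x,i)\setminus S(y,i)|$ then follows \emph{deterministically} from $|S(\{x,y\},i)|-|S(y,i)|$. Your route could be repaired with an actual large-deviation bound on the intersection (something like $O(d^{2i}/n+\log n)$, which fits inside the error since $d^i\ge d\ge \omega\log n$), but as written the step fails.

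The more serious problem is in part (ii), where your key simplification --- ``since $R$ is \emph{fixed}, the union bound now ranges only over the $n$ choices of $x$, not over the $\binom{n}{r}$ possible sets $R$'' --- proves a strictly weaker statement than the one the paper needs and proves. In Section 4 the lemma is applied to \emph{all} sets $R$ of size up to $(\log n)^2/\log\log n$ simultaneously, and the paper's proof accordingly union bounds over all $n^{(\log^2 n)/\log\log n+1}$ pairs $(x,R)$. This forces a per-pair failure probability of $\exp(-\Theta(\log^3 n/\log\log n))$, which a Chernoff bound at level $i=1$ (mean $d$) can only deliver when $d\gg \log^3 n/\log\log n$ --- not throughout the range $d\ge \omega\log^2 n/\log\log n$. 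This is precisely why the paper needs two ingredients absent from your proposal: for $d$ below roughly $(\log^{13}n)(\log\log n)$ it handles $i=1$ structurally, via a.a.s.\ $K_{2,3}$-freeness (any two vertices share at most two neighbours, so $|S(x,1)\cap N(R,1)|=O(r)$, a relative error $O(r/d)=O(1/\omega)$ --- the source of the $O(1/\omega)$ term in the statement, which your sketch attributes vaguely to ``lower-order corrections''), and it handles $i=2$ by Chernoff with $\eps=1/\omega$ around mean $d^2$, where $\eps^2 d^2/3 \ge \log^4 n/(3+o(1))(\log\log n)^2$ finally beats the union bound. Your weaker $n^{-2}$ target cannot be transferred to the applications. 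Finally, you explicitly leave the lower-bound interaction with $W=N(R,i)$ (``the main obstacle'') unresolved; the resolution is the paper's two-round exposure --- edges inside $V\setminus N(R,i)$ are untouched after exposing $N(R,\cdot)$, and excluding the $O(rd^i)$ discovered vertices perturbs each conditional expectation only by a factor $1+O(rd^i/n)$, the same error already produced by the expectation estimate --- but even with that observation you would still need the uniform-in-$R$ probability bounds above to close the argument.
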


\begin{proof}
For (i), we will show that a.a.s.\ for every $V' \subseteq V$ with $|V'| \le 2$ and $i \in \N$ we have the desired concentration for $|S(V',i)|$, provided that $d^i = o(n)$. The statement for any pair of vertices $x,y$ will follow immediately (deterministically) from this.

In order to investigate the expansion property of neighbourhoods, let $Z \subseteq V$, $z=|Z|$, and consider the random variable $X = X(Z) = |N(Z,1)|$. We will bound $X$ in a stochastic sense. There are two things that need to be estimated: the expected value of $X$, and the concentration of $X$ around its expectation.

Since for $x=o(1)$ we have $(1-x)^z = e^{-xz(1+O(x))}$ and also $e^{-x}=1-x+O(x^2)$, it is clear that
\begin{eqnarray}
\E [X] &=& n - \left(1- \frac {d}{n-1} \right)^z (n-z) \nonumber \\
&=& n - \exp \left( - \frac {dz}{n} (1+O(d/n)) \right) (n-z) \nonumber \\
&=& dz (1+O(dz/n)), \label{eq:expX}
\end{eqnarray}
provided $dz = o(n)$. We next use a consequence of Chernoff's bound (see e.g.~\cite[p.\ 27, Corollary~2.3]{JLR}), that
\begin{equation}\label{chern}
\Prob( |X-\E [X]| \ge \eps \E [X]) ) \le 2\exp \left( - \frac {\eps^2 \E [X]}{3} \right)
\end{equation}
for  $0 < \eps < 3/2$.

This implies that, for $\eps = 2/{\sqrt{\omega}}$, the expected number of sets $V'$ satisfying  
$$
\big| |N(V',1)| - \E[|N(V',1)|] \big| > \eps d|V'|
$$ 
and $|V'| \le 2$ is  at most 
\[
\sum_{z \in \{1,2\}} 2 n^z \exp \left( - \frac {\eps^2 z d }{3+o(1)} \right) \le \sum_{z \in \{1,2\}} 2 n^z \exp \left( - \frac {\eps^2 z \omega \log n}{3+o(1)} \right) = o(1),
\]
since $d \geq \omega \log n$. Hence the statement holds for $i=1$ a.a.s. Now, we will estimate the cardinalities of $N(V',i)$ up to the $i$'th iterated neighbourhood, provided $d^i = o(n)$ and thus $i = O(\log n /\log \log n)$. It follows from~(\ref{eq:expX}) and~(\ref{chern}) (with $\eps = 4 (\omega |Z|)^{-1/2}$) that in the case $\omega \log n / 2 \le |Z| = o(n/d)$ with probability at least $1-n^{-3}$
$$
|N(Z,1)| = d |Z| \left( 1+ O \left( d|Z|/n \right) + O\left((\omega |Z|)^{-1/2} \right) \right),
$$
where the bounds in $O()$ are uniform. As we want a result that holds a.a.s., we may assume this statement holds deterministically, since there are only $O(n^2 \log n)$ choices for $V'$ and $i$. Given this assumption, we have good bounds on the ratios of the cardinalities of $N(V',1)$, $N(N(V',1),1) = N(V',2)$, and so on. Since $i=O(\log n / \log \log n)$ and $\sqrt{\omega} \le (\log n)^2 (\log \log n)$, the cumulative multiplicative error term is
\begin{align*}
(1+&O(d/n) + O(1/\sqrt{\omega})) \prod_{j=2}^i \left( 1+ O \left( d^j/n \right) + O\left( \omega^{-1/2} d^{-(j-1)/2} \right) \right) \\
&= (1+O(1/\sqrt{\omega}) + O(d^i/n) )  \prod_{j=7}^{i-3} \left( 1+ O \left( \log^{-3} n \right) \right) = (1+O(1/\sqrt{\omega}) + O(d^i/n) ),
\end{align*}
and the proof of part (i) is complete.

\bigskip

Now, let us move to part (ii). Exactly the same strategy as for part (i) is used here and so we only outline the proof by pointing out the differences. Every time the $i$'th neighbourhood of $x$ is about to be estimated, we first expose the neighbourhood of $R$. Since for each vertex $v$, a.a.s. $|N(v)|=d(1+o(1))$, during this process, only $|N(R,i)|=O(rd^i)$ vertices are discovered. Now, the neighbourhood $N(x,i-1)$ is expanded, but this time the vertices of $N(R,i)$ need to be excluded from the consideration. However, the expected size of $S(x,i) \setminus N(R,i)$ is affected by a factor of $(1+O(rd^i/n))$ only, and so this causes no problem (since the very same error term comes from~(\ref{eq:expX})). As before, the largest error term for the expectation appears for the largest possible value of $i$. The concentration follows from~(\ref{chern}), and thus (again, exactly as before) the error term in the concentration result is the largest for $i=1$. Using that $r \leq (\log^2 n)/(\log \log n)$, the expected number of pairs $(x,R)$ for which the statement fails for $i=1$ is, by applying~(\ref{chern}) with $\eps = 2 /\sqrt{\omega'}$, at most
\begin{equation}\label{densecase}           
n^{\frac{\log^2 n}{\log \log n}+1} \exp \left( - \frac {\eps^2 d}{3+o(1)} \right) \le \exp \left( \frac{(1+o(1)\log^3 n}{\log \log n} - \frac {\eps^2 d}{3+o(1)} \right) = o(1),
\end{equation}
provided $d \ge (\omega' \log^3 n)/ (\log \log n)$ for some function $\omega'$ tending to infinity with $n$. (Note that the condition for $d$ here is slightly stronger than the one we want to have.) Hence, by Markov's inequality, a.a.s.\ we are guaranteed to have an error term of $(1+O (1/\sqrt{\omega'}) + O(r d^i/n) )$ for any pair $(x,R)$. Therefore, it follows from the fact that $\omega \leq (\log n)^4 (\log \log n)^2$ that the error term $O(\sqrt{\log \log n/(\omega \log n)})$ is of order at least  $1/\sqrt{\omega'}$ if $\omega' \geq (\log n)^{10}(\log \log n)^2$. The proof of part (ii) is finished for $d \ge (\log^{13} n) (\log \log n)$.

Next, we shall concentrate on $(\omega \log^2 n)/(\log \log n) \le d = n^{o(1)}$ and shall obtain a slightly better error term for the case $i=1$. It is well known (and can be easily shown using Markov's inequality) that a.a.s.\ there is no $K_{2,3}$ in $G$. Conditioning on this we get from part (i) that
$$
\left| S(x,1) \setminus N(R,1) \right| = \left(1+O \left( \sqrt{ \frac {\log \log n}{\omega \log n}}\right) + O\left( \frac {|R|}{d} \right) \right) d,
$$
and the result holds for $i=1$, since $|R|/d \le \omega^{-1}$. As before, the error term is maximal for  $i=1$: indeed, for $i \ge 2$ we already showed that the expectation of $|S(x,i) \setminus N(R,i)|$ can be estimated in the same way as the expectation of $|S(x,i)|$, since its size is not affected by disregarding $N(R,i)$. In particular, for $i=2$ we have the expected size of $S(x,2) \setminus N(R,2)$  to be equal to
$$
\left(1+O \left(\frac{rd^2}{n} \right) \right) d \left| S(x,1) \setminus N(R,1) \right| =(1+o(1))d^2.
$$
The expected number of pairs $(x,R)$ for which the statement fails for $i=2$ is, by applying~(\ref{chern}) with $\eps = 1 /\omega$, at most
\begin{eqnarray}       
n^{\frac{\log^2 n}{\log \log n}+1} \exp \left( - \frac {\eps^2 d^2}{3+o(1)} \right) &\le& \exp \left( \frac{(1+o(1))\log^3 n}{\log \log n} - \frac {\log^4 n}{(3+o(1))(\log \log n)^2} \right) \nonumber \\
&=& o(1), \label{densecase2}    
\end{eqnarray}
and by Markov's inequality we are guaranteed an error term of $(1+O (1/\omega) + O(r d^2/n))$ for any pair $(x,R)$ and $i=2$. 
By the same argument as in part (i), the cumulative multiplicative error term is a product of the error term for $i=1$ (from the concentration) and the last $i$ (from the expectation), and the proof of part (ii) is complete.
\end{proof}

\section{Upper bound}\label{sec:upper_bound}

In this section, we shall prove an upper bound for the metric dimension, coming from an application of the probabilistic method.

\begin{theorem}\label{thm:upper bound}
Suppose that $d=p(n-1) \gg \log^3 n$ and $n - d \gg \log n$. Let $i \ge 0$ be the largest integer such that $d^i = o(n)$, let $c = c(n) = d^{i+1}/n$, and let
$$
q=
\begin{cases}
(e^{-c})^2+(1-e^{-c})^2 & \text{ if } p = o(1) \\
p^2 + (1-p)^2 & \text{ if } p = \Theta(1).
\end{cases}
$$
Finally, let $G = (V,E) \in G(n,p)$. Then, the following assertions hold a.a.s.
\begin{itemize}
\item [(i)] If $c = \Theta(1)$, then
$$
\beta(G) \le (1+o(1)) \frac {2 \log n}{\log (1/q)} = \Theta(\log n).
$$
\item [(ii)] If $c \to \infty$, then
$$
\beta(G) \le (1+o(1)) \left( \frac {d^i}{n} + e^{-c} \right)^{-1} (\log n) \gg \log n.
$$
In particular,
$$
\beta(G) \le
\begin{cases}
(1+o(1)) e^c \log n =(1+o(1)) \frac { 2 \log n}{\log (1/q)} & \text{ if } e^{-c} \gg d^i / n \\
(1+o(1)) \frac {n \log n}{d^i} & \text{ if } e^{-c} \ll d^i / n.
\end{cases}
$$
\end{itemize}
\end{theorem}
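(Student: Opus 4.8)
The plan is to use the probabilistic method. Fix $r$ to be (a trifle more than) the claimed bound, condition on $G$ lying in the a.a.s.\ event where the expansion estimates of Lemma~\ref{lem:gnp exp} hold, and choose $R$ to be a uniformly random $r$-subset of $V$. For a fixed pair $u\neq v$ write
$$
\mathrm{Res}(u,v)=\{w\in V : d(w,u)\neq d(w,v)\}, \qquad M(u,v)=|\mathrm{Res}(u,v)|.
$$
Since $R$ fails to resolve $\{u,v\}$ precisely when $R\cap\mathrm{Res}(u,v)=\emptyset$, we have
$$
\Prob_R\big(R\text{ does not resolve }\{u,v\}\big)=\frac{\binom{n-M(u,v)}{r}}{\binom{n}{r}}\le\Big(1-\frac{M(u,v)}{n}\Big)^{r}.
$$
Hence, if a.a.s.\ $M(u,v)\ge M_0$ for \emph{every} pair, a union bound over the $\binom n2$ pairs makes the expected number of unresolved pairs $o(1)$ as soon as $n^2(1-M_0/n)^r=o(1)$, i.e.\ as soon as $r>(1+o(1))\,2\log n/\log\!\big(1/(1-M_0/n)\big)$; on that event some $R$ of size $r$ resolves all pairs and $\beta(G)\le r$. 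The whole argument therefore reduces to a \emph{uniform} lower bound $M_0$ on the number of resolvers, and writing $q'=1-M_0/n$ for the resulting fraction of non-resolvers, I aim to show $q'\le q-(2-o(1))d^i/n$ in the sparse case.

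First I would count resolvers from two ``shells''. \emph{(a) Shell $i$.} Lemma~\ref{lem:gnp exp}(ii) with the single excluded vertex gives $|S(u,i)\setminus N(v,i)|=(1+o(1))d^i$ and $|S(v,i)\setminus N(u,i)|=(1+o(1))d^i$; these sets are disjoint and every vertex in them resolves $\{u,v\}$, contributing at least $(2-o(1))d^i$. \emph{(b) Shell $i+1$.} For $w\notin N(u,i)\cup N(v,i)$ one has $d(w,u)=i+1$ iff $w\sim S(u,i)$, and likewise for $v$; thus $w$ resolves whenever it is adjacent to exactly one of $S(u,i),S(v,i)$, an event of probability $(1+o(1))\,2e^{-c}(1-e^{-c})$ because $(1-p)^{|S(\cdot,i)|}=(1+o(1))e^{-c}$. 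This contributes $(1-o(1))\,2ne^{-c}(1-e^{-c})=(1-o(1))(1-q)n$. Adding the two, $M_0\ge(2-o(1))\big(d^i+ne^{-c}(1-e^{-c})\big)$. Feeding this into the threshold above, and using $\log(1/q')=(1+o(1))(1-q')=(1+o(1))\,2(d^i/n+e^{-c})$ when $q'\to1$ (respectively $\log(1/q')=\Theta(1)$ and $q'=q(1+o(1))$ when $c=\Theta(1)$, where the $d^i/n$ term is lower order), yields exactly $r=(1+o(1))(d^i/n+e^{-c})^{-1}\log n$ of part (ii) and the clean constant $2\log n/\log(1/q)$ of part (i); the subcase split in (ii) is merely whether $e^{-c}$ or $d^i/n$ dominates $d^i/n+e^{-c}$.

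The main difficulty is making the shell-$(i+1)$ estimate \emph{uniform} over all $\binom n2$ pairs, since I need $M(u,v)\ge M_0$ for every pair, not just in expectation. The plan is to reveal edges in two stages: first expose the close neighbourhoods $N(u,i),N(v,i)$ together with $S(u,i),S(v,i)$, whose sizes are controlled by Lemma~\ref{lem:gnp exp}; the edges from each far vertex $w$ to $S(u,i)\cup S(v,i)$ are then still unexposed, so the indicators $[\,w\sim S(u,i)\,]$ and $[\,w\sim S(v,i)\,]$ are independent across $w$ and, since $|S(u,i)\cap S(v,i)|=O(d^{2i}/n)=o(d^i)$ (here $d^i=o(n)$ is essential), nearly independent of one another. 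The resolver count is thus a sum of independent indicators, and Chernoff's inequality~(\ref{chern}) together with a union bound over the pairs gives the uniform bound \emph{provided} $ne^{-c}(1-e^{-c})\gg\log n$. This is exactly the regime in which the shell-$(i+1)$ term is not already dominated: when $ne^{-c}\lesssim\log n$ one necessarily has $i\ge1$ and $e^{-c}\ll d^i/n$, and then the shell-$i$ contribution $d^i\gg\log^3 n$ alone supplies the required $M_0$, so the delicate concentration step is not needed. Finally, the case $p=\Theta(1)$ is the clean base case: here $i=0$, a.a.s.\ $\mathrm{diam}(G)=2$, a resolver is precisely a vertex adjacent to exactly one of $u,v$, the fraction of resolvers is $2p(1-p)=1-q$ with $q=p^2+(1-p)^2$, and the concentration needed for the uniform bound follows from $n-d\gg\log n$ and $d\gg\log^3 n$.
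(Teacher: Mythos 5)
Your proposal is correct and takes essentially the same route as the paper's own proof: a first-moment argument over a random $r$-set (the paper's ``Claim''), a uniform lower bound on the number of resolvers obtained by counting shell-$i$ resolvers via Lemma~\ref{lem:gnp exp} plus shell-$(i+1)$ resolvers (far vertices adjacent to exactly one of $S(u,i)$, $S(v,i)$) with Chernoff concentration and a union bound over pairs, and a case split so that when $e^{-c}n$ is small the shell-$i$ term alone supplies the bound. The only cosmetic differences are that the paper splits at $c \le 0.51\log n$ rather than at $ne^{-c}(1-e^{-c}) \gg \log n$, and uses $|S(x,i)\setminus S(y,i)|$ from Lemma~\ref{lem:gnp exp}(i) where you invoke part (ii) with a single excluded vertex.
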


\begin{proof}
Let us start with the following useful observation.

\textbf{Claim}: Suppose that a (deterministic) graph $G=(V,E)$ on $n$ vertices satisfies the following property: for a given pair of vertices $x,y \in V$, the probability that a random set $W$ of cardinality $w$ does \emph{not} distinguish $x$ and $y$ is at most $1/n^2$. (For different pairs of vertices, the set $W=W(x,y)$ is generated independently.) Then, the metric dimension is at most $w$.

\emph{Proof of the Claim}: The claim clearly holds by the probabilistic argument. Indeed, since the expected number of pairs that are not distinguished by a random set $W$ is at most $1/2$, there is at least one set $W$ that distinguishes \emph{all} pairs. 

\smallskip

Now, we are going to show that a.a.s.\ a random graph satisfies some expansion property, and then we will show that any (deterministic) graph $G$ with this property must also satisfy the assumption of the claim (for some $w$ to be determined soon), and so must have $\beta(G) \le w$. The conclusion will be then that a.a.s.\ $\beta(G) \le w$ for $G \in G(n,p)$.

Let $\eps > 0$ be any constant (at the end, we will let $\eps \to 0$ slowly), and fix a pair of vertices $x,y \in V$ of $G = (V,E) \in G(n,p)$. Suppose first that $i=0$; that is, $p = \Theta(1)$. Note that any vertex that is adjacent to $x$ but not to $y$ (or vice versa) distinguishes this pair. We expect $2p(1-p)(n-2) \ge (2+o(1)) (\omega \log n)$ of such vertices and so a.a.s.\ for every pair of two vertices we have
$$
X := \left| \Big(S(x,1) \setminus N(y,1) \Big) \cup \Big(S(y,1) \setminus N(x,1) \Big) \right| = (2+o(1)) p(1-p)n
$$
by~(\ref{chern}) (applied with $\eps=3/\sqrt{\omega}$). 

Finally, consider any deterministic graph for which this property holds for all pairs $x$ and $y$. Let $p_w$ be the probability that a random set $W$ of cardinality $w$ does not distinguish the pair under consideration. We get that
$$
p_w \le \left( \frac{n-X}{n} \right) \left( \frac {n-1-X}{n-1} \right) \cdots \left( \frac {n-w+1-X}{n-w+1} \right) \le \left(1 - \frac {X}{n} \right)^{w} = q^{w(1+o(1))},
$$
which is at most $1/n^2$ for $w = (2+\eps) (\log n) / (\log (1/q))$. The claim implies that $\beta(G) \le w$, and the result follows for $i=0$.

\smallskip

Suppose now that $i \ge 1$; that is, $p=o(1)$. As before, we are going to use the claim to show the desired bound. However, this time the expansion property will be different. Clearly, if there exists $z \in V$ such that $z \in S(x,j) \setminus S(y,j)$ for some $j \in \N$, then $z$ distinguishes the pair. It follows from Lemma~\ref{lem:gnp exp}(i) that a.a.s.\ for every pair $x,y$ we have $|S(x,i) \setminus S(y,i)| = (1+o(1)) d^i$, and so, by symmetry, there are $(2+o(1)) d^i$ vertices in the $i$'th neighbourhood of $x$ or $y$ that can distinguish this pair (this is the first type of vertices which is able to distinguish $x$ and $y$). 

Now, let us focus on distinguishing vertices of the second type. Any vertex $z$ at distance $i+1$ from $x$ but at distance at least $i+2$ from $y$, or vice versa, also distinguishes $x$ and $y$. Suppose first that $c \le 0.51 \log n$ which in turn implies that $d^i/n = c/d = o(\log^{-2} n)$. By Lemma~\ref{lem:gnp exp}(i), since $d \gg \log^3 n$ and hence $\omega \gg \log^2 n$, and using as before that for $p=o(1)$, $1-p=e^{-p+O(p^2)}$, we expect
\begin{align*}
2(1-p)&^{(1+o(\log^{-1} n)+O(d^i/n) )d^i} \left( 1 - (1-p)^{(1+o(\log^{-1} n)+O(d^i/n) )d^i} \right) n (1+o(1)) \\
&= 2 \exp \left( -(1+o(\log^{-1} n)) c \right) \left( 1 - \exp \left( -(1+o(\log^{-1} n)) c \right) \right)n(1+o(1)) \\
&= 2 e^{-c+o(1)} (1-e^{-c+o(1)}) n \\
&= (2+o(1)) e^{-c} (1-e^{-c}) n
\end{align*}
vertices of this type. (This is the place where we need to control error terms by concentrating on graphs that are dense enough.) Since the expectation is $\Omega(n^{0.49})$, it follows from Chernoff's bound~(\ref{chern}) that with probability $1-o(n^{-2})$ the cardinality is well concentrated around its expectation. On the other hand, if $c > 0.51 \log n$, then the contribution from this second group is bounded by $(2+o(1))e^{-c}n \le 3n^{0.49}$. This can be ignored since the contribution from the first group is at least $(2+o(1))d^i = \Omega(\sqrt{n})$.  We get that with probability $1-o(n^{-2})$ the number of vertices that can distinguish the pair $x$ and $y$ is at least $(2+o(1)) \left( d^i + e^{-c} (1-e^{-c}) n \right)$. Hence, a random graph a.a.s.\ has this expansion property for all pairs $x,y$.

Now, as before, we consider any deterministic graph with the mentioned expansion property, and show that for a given pair $x,y$, the probability $p_w$ that a random set of cardinality $w$ ($w$ will be determined soon) does not distinguish this pair is at most $1/n^2$. For $c=\Theta(1)$ we get that
\[
p_w \le \left(1-(2+o(1)) \left( \frac {d^i}{n} + e^{-c} (1-e^{-c}) \right) \right)^w = \left(1 - 2 e^{-c} (1-e^{-c}) \right)^{w(1+o(1))} = q^{w(1+o(1))},
\]
which is at most $1/n^2$ for $w = (2+\eps) \log n / \log (1/q)$. If $c \to \infty$, then
\[
p_w \le \exp \left(-(2+o(1)) \left( \frac {d^i}{n} + e^{-c} \right) w \right) \le n^{-2}
\]
for $w = (1+\eps) ( d^i/n + e^{-c} )^{-1} \log n$. The desired bound is implied by the claim. 

As we promised, we let $\eps$ to tend to zero (slowly) and the proof is complete.
\end{proof}

\section{Lower bounds}\label{sec:lower_bound}

In order to show lower bounds, we shall make use of the following well-known result proved in~\cite[Theorem~6]{bol_diameter} for graphs with average degree $d = p(n-1)$ that tends to infinity faster than $\log^3 n$. Moreover, in~\cite[Corollary~10.12]{bol} the condition was relaxed and it is now required only that $d \gg \log n$. Recall that $D=D(G)$ is the diameter of a graph $G$.

%\begin{lemma}[\cite{bol_diameter}]~\cite[Corollary~10.12]\label{lem:diameter}
%Let $c$ be a positive constant, $i=i(n) \ge 2$ a natural number, and define $p = p(n,c,i)$, $0 < p < 1$, by $p^i n^{i-1} = \log (n^2 / c)$. Suppose that $(pn)/(\log n)^3 \to \infty$. Finally, let $G = (V,E) \in G(n,p)$. Then,
%$$
%\Prob( D(G) = i + \delta) =
%\begin{cases}
%e^{-c/2} & \text{ if } \delta = 0,\\
%1-e^{-c/2} & \text{ if } \delta = 1.
%\end{cases}
%$$
%\end{lemma}

\begin{lemma}[\cite{bol}, Corollary 10.12]\label{lem:diameter}
Suppose that $d = p (n-1) \gg \log n$ and 
\[
d^i/n - 2 \log n \to \infty \text{ \ \ \ \ and \ \ \ \ } d^{i-1}/n - 2 \log n \to -\infty.
\]
Then the diameter of  $G(n,p)$ is equal to $i$ a.a.s.
\end{lemma}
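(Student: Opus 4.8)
The plan is to prove the two inequalities $D \le i$ and $D \ge i$ separately, matching the first to the hypothesis $d^i/n - 2\log n \to \infty$ and the second to $d^{i-1}/n - 2\log n \to -\infty$. Both rest on a single ``escape probability'' estimate obtained by breadth-first exploration: for fixed vertices $x \ne z$ and a radius $k$ with $d^{k-1} = o(n)$, one has $z \notin N(x,k)$ if and only if $z \notin N(x,k-1)$ and $z$ sends no edge into $N(x,k-1)$. Hence, writing $M := |N(x,k-1)|$ and conditioning on the (probability $1-o(1)$) event $z \notin N(x,k-1)$, the edges from $z$ into $N(x,k-1)$ are still unexposed and
\[
\Prob\big( d(x,z) > k \big) = (1+o(1))\,(1-p)^{M} = \exp\!\big(-(1+o(1))\, pM\big).
\]
By Lemma~\ref{lem:gnp exp}(i) applied to the singleton $\{x\}$ we have $M = (1+o(1)) d^{k-1}$, and with $p = (1+o(1))d/n$ this gives $\Prob(d(x,z) > k) = \exp(-(1+o(1)) d^{k}/n)$.

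For the upper bound, let $U = \#\{(x,z) : d(x,z) > i\}$. If $d^{i-1} = o(n)$, the estimate above with $k=i$ gives $\E[U] \le n^2 \exp(-(1+o(1)) d^i/n)$, which tends to $0$ exactly when $d^i/n$ exceeds $2\log n$ by a growing margin; the factor $2$ is the logarithm of the number of pairs. By Markov's inequality $U = 0$ a.a.s., i.e.\ $D \le i$. In the complementary range $d^{i-1} = \Omega(n)$ the ball $N(x,i-1)$ saturates, $M = \Theta(n)$, and the escape probability is at most $\exp(-(1-o(1)) d) \ll n^{-2}$ since $d \gg \log n$; the same union bound again yields $D \le i$.

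For the lower bound I would run a second moment argument on $Y := \#\{\{x,z\} : d(x,z) \ge i\}$. Here the relevant radius is $k = i-1$, and crucially $d^{i-2} = o(n)$ holds automatically: from $d^{i-1}/n < 2\log n$ and $d \gg \log n$ we get $d^{i-2} = d^{i-1}/d = o(n)$, so the clean formula applies and $\Prob(d(x,z) \ge i) = \exp(-(1+o(1)) d^{i-1}/n)$. Consequently
\[
\E[Y] = (1+o(1)) \binom{n}{2} \exp\!\big(-(1+o(1)) d^{i-1}/n\big) \to \infty,
\]
since $d^{i-1}/n = 2\log n - \omega(1)$ forces $\E[Y] \ge \tfrac12\, e^{\omega(1)}$. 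It then remains to show $\mathrm{Var}(Y) = o(\E[Y]^2)$ and to conclude $Y > 0$ a.a.s.\ by Chebyshev. The point is that for two pairs with four distinct endpoints the escape events $\{z \notin N(x,i-1)\}$ and $\{z' \notin N(x',i-1)\}$ are controlled largely by the disjoint edge-sets incident to $z$ and to $z'$, so their joint probability factorises up to a $(1+o(1))$ factor, while the few pairs of pairs sharing a vertex contribute only to lower order.

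The main obstacle is the sharpness of the threshold. Because the decisive exponent $d^{i-1}/n$ sits right at $2\log n$, every $(1+o(1))$ factor inside the exponentials must be tracked honestly, and the ball-size estimate $M = (1+o(1))d^{k-1}$ must hold \emph{simultaneously} for all centres --- which is exactly why Lemma~\ref{lem:gnp exp}(i) is phrased with explicit error terms and an internal union bound. The genuinely delicate part is the variance estimate: one has to rule out that escape events for different pairs are positively correlated through shared portions of their explored neighbourhoods, and verifying that the correlation correction is $1+o(1)$ (and not merely bounded) is what makes the second moment method deliver $Y>0$ rather than just $\E[Y]\to\infty$.
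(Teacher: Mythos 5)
First, a point of comparison: the paper does not prove this lemma at all --- it is quoted verbatim from the literature, with the history spelled out just above the statement: Theorem~6 of \cite{bol_diameter} establishes it under the stronger hypothesis $d \gg \log^3 n$, and Corollary~10.12 of \cite{bol} relaxes this to $d \gg \log n$. Your first-moment/second-moment architecture is indeed the skeleton of that classical proof, so in outline you are reconstructing the cited argument rather than diverging from it. The problem is that, as executed, your sketch has a genuine quantitative gap exactly where the two cited versions differ.

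The gap is the bookkeeping in the exponent. You assert $\Prob(d(x,z)>k)=\exp(-(1+o(1))d^k/n)$ and conclude that $\E[U]\le n^2\exp(-(1+o(1))d^i/n)$ ``tends to $0$ exactly when $d^i/n$ exceeds $2\log n$ by a growing margin.'' But the threshold is \emph{additively} sharp: since $d^i/n\ge 2\log n$, the multiplicative $(1+o(1))$ hides an additive error of size $o(d^i/n)=o(\log n)$ in the exponent, which can swamp a margin $d^i/n-2\log n$ that grows arbitrarily slowly (say, like $\log\log n$). Quantifying with the tool you invoke: Lemma~\ref{lem:gnp exp}(i) gives $M=d^{k-1}\bigl(1+O(\omega^{-1/2})+O(d^{k-1}/n)\bigr)$ with $\omega\approx d/\log n$, so the exponent carries an error of order $(\log n)\omega^{-1/2}=\log^{3/2}n/\sqrt{d}$, which is $o(1)$ only when $d\gg\log^3 n$. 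In other words, your argument made rigorous recovers precisely the older result of \cite{bol_diameter}, not the statement as quoted; the relaxation to $d\gg\log n$ is the hard content of \cite[Corollary~10.12]{bol} and requires estimating $\E[(1-p)^{M}]$ itself with additive $o(1)$ precision in the exponent, not concentration of $M$ plus substitution. Two further soft spots compound this. Since $M$ is random, Jensen gives $\E[(1-p)^{M}]\ge(1-p)^{\E M}$, i.e.\ the inequality goes the \emph{wrong} way for your $D\le i$ half, so you need lower-tail control of $M$ at the $n^{-2}$ scale (the first-generation fluctuation alone, of relative order $d^{-1/2}$, contributes $\Theta(\log n/\sqrt{d})$ to the exponent); relatedly, an a.a.s.\ statement such as Lemma~\ref{lem:gnp exp}(i) cannot simply be inserted into the computation of probabilities of order $n^{-2}$ --- you must use the per-pair failure probabilities from its proof. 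Finally, the variance bound for your second-moment half, which faces the same sharpness issue, is asserted rather than carried out. You do flag the crux yourself (``every $(1+o(1))$ factor \ldots\ must be tracked honestly''), but flagging it is not resolving it --- which is presumably why the authors cite the result instead of reproving it.
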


\bigskip

Let $i \ge 0$ be the largest integer such that $d^i = o(n)$, and let $c = c(n) = d^{i+1}/n$. Now, we are ready to show that the upper bound for the metric dimension is asymptotically tight if $c \le \log \log n - \log \log \log n - \log 3$ (see Theorem~\ref{thm:lower_bound_Suen}); otherwise, there is at most a constant factor difference  (see Theorems~\ref{thm:lower_bound_diam} and~\ref{thm:lower_bound_diam2}).
\begin{theorem}\label{thm:lower_bound_Suen}
Let $\eps = \eps(n) = (3 \log \log n) / (\log n) = o(1)$. Suppose that $\log^5 n \ll d=p(n-1) \le n(1-\eps)$. Let $i \ge 0$ be the largest integer such that $d^i = o(n)$, let $c = c(n) = d^{i+1}/n$, and let
$$
q=
\begin{cases}
(e^{-c})^2+(1-e^{-c})^2 & \text{ if } p = o(1) \\
p^2 + (1-p)^2 & \text{ if } p = \Theta(1).
\end{cases}
$$
Suppose that $e^{c} \le \eps^{-1} = (\log n)/(3 \log \log n)$. Finally, let $G = (V,E) \in G(n,p)$. Then, a.a.s.
$$
\beta(G) \ge (1+o(1)) \frac {2 \log n}{\log (1/q)}.
$$
\end{theorem}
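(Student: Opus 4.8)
The plan is to establish a matching lower bound by showing that for a random set (or rather, for *any* fixed set) of size slightly below the target threshold $w^* = (2-\eps)\log n / \log(1/q)$, there a.a.s.\ exist two vertices that the set fails to distinguish. The upper bound proof (Theorem~\ref{thm:upper bound}) rested on a first-moment/probabilistic-method argument: the expected number of unresolved pairs under a random $W$ is $o(1)$ once $w$ exceeds $2\log n/\log(1/q)$. For the lower bound I want the reverse inequality, so the natural route is a second-moment argument. Concretely, fix a candidate resolving set $R$ with $|R| = w = (2-\eps)\log n/\log(1/q)$ and count the number of pairs $\{x,y\}$ (with $x,y \notin R$) that $R$ fails to resolve, i.e.\ with $d_R(x) = d_R(y)$. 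I would define, for each such pair, an indicator $I_{xy}$ and let $Y = \sum_{\{x,y\}} I_{xy}$ be the number of unresolved pairs. The goal is to show $\E[Y] \to \infty$ and $\mathrm{Var}(Y) = o(\E[Y]^2)$, so that by Chebyshev a.a.s.\ $Y > 0$ and $R$ is not resolving; a union bound over the (few) relevant $R$ then finishes the argument.

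\textbf{Computing the first moment.} The heart of the calculation is $\Prob(d_R(x) = d_R(y))$ for a fixed pair and fixed $R$. For each $z \in R$, the pair $(x,y)$ is ``resolved by $z$'' precisely when the distances $d(z,x)$ and $d(z,y)$ differ. Using the expansion estimates of Lemma~\ref{lem:gnp exp}, the distance from a typical vertex $z$ to $x$ concentrates, so the relevant event is governed by whether $x$ and $y$ land in the same distance-shell from $z$. The local structure makes the events across distinct $z \in R$ asymptotically independent once we condition on the neighbourhood profiles of $x$ and $y$; each contributes a factor of roughly $q$, giving $\Prob(d_R(x)=d_R(y)) = q^{w(1+o(1))}$, matching the $p_w$ computation in the upper-bound proof but now read as a lower estimate. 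With $w = (2-\eps)\log n/\log(1/q)$ this probability is $n^{-(2-\eps)(1+o(1))}$, and summing over the $\binom{n}{2} = \Theta(n^2)$ pairs yields $\E[Y] = n^{\eps - o(1)} \to \infty$. The hypothesis $e^c \le (\log n)/(3\log\log n)$ is exactly what keeps $c$ small enough that the ``second shell'' contribution $e^{-c}(1-e^{-c})n$ dominates the count of distinguishing vertices, so that $q$ is the correct base and the bound is tight; I expect this constraint to enter precisely in controlling the per-vertex resolving probability.

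\textbf{The second moment and the main obstacle.} I would bound $\mathrm{Var}(Y)$ by splitting $\sum_{xy,x'y'}\big(\E[I_{xy}I_{x'y'}] - \E[I_{xy}]\E[I_{x'y'}]\big)$ according to how many vertices the pairs $\{x,y\}$ and $\{x',y'\}$ share. Pairs that are vertex-disjoint and whose neighbourhoods do not overlap contribute negligibly to the covariance, since the resolving events are then nearly independent; the dangerous terms are those where the pairs share a vertex or where their neighbourhoods up to radius $i$ intersect. \textbf{The hard part will be} controlling exactly these dependent contributions: two pairs sharing the vertex $x$ have correlated distance vectors to $R$, and I must show that the number of such ``clustered'' pairs, weighted by their joint unresolving probability, is $o(\E[Y]^2)$. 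This is where the expansion lemma does the real work — because $d^i = o(n)$, the radius-$i$ neighbourhoods are sparse enough that a random pair's neighbourhood meets another random pair's neighbourhood only with probability $O(d^{2i}/n) = o(1)$, so the typical pair behaves independently and the correlated terms are lower-order. I would carry this out by conditioning on the exposed neighbourhoods of the vertices involved (as in the proof of Lemma~\ref{lem:gnp exp}(ii), exposing $N(R,i)$ first), reducing each joint probability to a product of conditionally-independent single-vertex factors with a controllable multiplicative error. Finally, since the whole argument is being run for a single fixed $R$ but we need it for all $R$ of size $w$, I would note it suffices to exhibit one resolving-set failure: the statement $\beta(G) \ge w^*$ a.a.s.\ follows by showing that \emph{every} set of size $w = (2-\eps)\log n/\log(1/q)$ fails, which by the second-moment bound holds for a fixed $R$ with probability $1 - o(\binom{n}{w}^{-1})$, whence a union bound over all $\binom{n}{w}$ such sets closes the argument, and letting $\eps \to 0$ slowly gives the claimed asymptotic.
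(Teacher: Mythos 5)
Your overall architecture --- fix a candidate set $R$ of size $r=(2-\eps)\log n/\log(1/q)$, count the pairs $x,y$ outside $N(R,i)$ with $d_R(x)=d_R(y)$, show the expected count is $(1+o(1))\binom{n}{2}q^r=n^{\eps-o(1)}$, deduce that some pair survives, and union bound over all sets --- is exactly the paper's, and your first-moment computation is sound (the paper makes it precise by noting that the diameter is a.a.s.\ $i+2$, so each coordinate of $d_R(x)$ is $i+1$ or $i+2$ and each vertex of $R$ contributes a factor $q$). The genuine gap is the tool you use for the ``$Y>0$'' step. Chebyshev gives $\Prob(Y=0)\le\mathrm{Var}(Y)/\E[Y]^2$, which can never be smaller than order $1/\E[Y]$; here $\E[Y]\approx n^{\eps}=e^{3\log\log n}=\log^3 n$, so the best failure probability a second-moment argument can deliver for a fixed $R$ is polylogarithmic, roughly $\log^{-3}n$. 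But the union bound must beat $\binom{n}{r}$, and $r$ can be as large as $\log^2 n/\log\log n$ (when $e^c$ is near $\eps^{-1}$), so $\binom{n}{r}=\exp\left(\Theta(\log^3 n/\log\log n)\right)$; even in the mildest regime $r=\Theta(\log n)$ you need failure probability $\exp(-\omega(\log^2 n))$. Your claim that the second-moment bound holds ``with probability $1-o(\binom{n}{w}^{-1})$'' is therefore not something Chebyshev can produce, and since $\E[Y]$ is only polylogarithmic there is no way to rescue this within the variance framework.

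The paper closes exactly this hole with Suen's correlation inequality, which gives the \emph{exponential} bound $\Prob(Y=0)\le\exp(-\mu+\Delta e^{2\delta})$ with $\mu=(1+o(1))\binom{n}{2}q^r$, $\Delta=(1+o(1))3\binom{n}{3}\left((e^{-c})^3+(1-e^{-c})^3\right)^r$ and $\delta=(2+o(1))nq^r$; note Suen (rather than the standard Janson inequality) is needed because the events $\{d_R(x)=d_R(y)\}$ are not monotone in the edge set. The hypothesis $e^c\le\eps^{-1}$ enters precisely in verifying $\Delta e^{2\delta}=o(\mu)$: one needs $n\left(1-\frac{1-q}{2q}\right)^r=o(1)$, which requires $1-q=2e^{-c}(1-e^{-c})\ge(2+o(1))\eps$ --- so the condition controls the pairwise-correlation term, not the per-vertex resolving probability as you guessed. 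Suen then yields $\Prob(Y=0)\le\exp(-n^{\eps}/4)=\exp(-\Omega(\log^3 n))$, which beats $\binom{n}{r}\le\exp(O(\log^3 n/\log\log n))$. Your covariance decomposition is morally the same computation as bounding $\Delta$ and $\delta$, so the repair is to replace Chebyshev by Suen's inequality; the rest of your outline then goes through. (Two smaller points: the pairs must be restricted to $x,y\notin N(R,i)$ after exposing $N(R,i)$, with $|V\setminus N(R,i)|=(1+o(1))n$ guaranteed by the expansion lemma; and $\eps$ in this theorem is the fixed function $(3\log\log n)/\log n$, not a free parameter to be sent to zero at the end.)
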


\begin{proof}
Our goal is to show that a.a.s.\ there is no resolving set $R$ of cardinality
$$
r := \frac{(2-\eps)\log n}{\log (1/q)} \le (1+o(1)) \frac{\log n}{\eps} \le \frac {\log^2 n}{\log \log n}.
$$
The probability that a given set $R$ of cardinality $r$ forms a resolving set has to be estimated from above. We will use Suen's inequality that was introduced in~\cite{Suen} and revised in~\cite{Janson_Suen}, and then the result will follow after applying the union bound. First we consider the case $p=o(1)$ (that is, $i \ge 1$); the differences in the case $p=\Theta(1)$ (that is, $i=0$) will be carefully discussed afterwards.

%\dieter{put bins here?}
%The general approach is as follows. We are going to test all sets $R \subseteq V$ with $|R|=r$ and partition them into 4 bins.\\
%-- \emph{Bin 1} contains sets with $|N(R,i-1)| > (\eta-\eps/2) \frac {n \log n}{d}$;\\
%-- \emph{Bin 2} contains sets with $|N(R,i-1)| \le (\eta-\eps/2) \frac {n \log n}{d}$ and there is at most one vertex at distance at least $i+1$ from $R$;\\
%-- \emph{Bin 3} contains sets with $|N(R,i-1)| \le (\eta-\eps/2) \frac {n \log n}{d}$ and there are at least two vertices at distance at least $i+1$ from $R$ and there is at least one vertex at distance at least $i+2$ from $R$;\\
%-- \emph{Bin 4} contains all remaining sets.
%
%The observation is that when all sets under consideration are in Bin 4, then there is no resolving set of cardinality $r$. Indeed, for every set $R$ with $|R|=r$ there are at least two vertices at distance exactly $i+1$ from \emph{every} vertex from $R$ and hence they cannot be distinguished by $R$. 

By Lemma~\ref{lem:gnp exp}(i) applied with $\omega \gg \log^4 n$, a.a.s.\ for every $v \in V$, we have 
\begin{eqnarray*}
|S(v,i)| &=& d^i (1+o(\log^{-2} n)). 
\end{eqnarray*}
 Hence,  by repeatedly applying Lemma~\ref{lem:gnp exp}(i) for all vertices of $R$, it follows that a.a.s. for all $R \subseteq V$ with $|R|=r$ we have 
$$|N(R,i)| = O(d^i r) = O(cnr/d) = O(n \log^2 n / d) = o(n).$$ 
Moreover, by Lemma~\ref{lem:gnp exp}(ii), this time applied with $\omega \gg (\log n)^3 (\log \log n)$,  it also follows that a.a.s. for all $R$ and all $v \in R$ we have
\begin{eqnarray*}
|S(v,i) \setminus N(R \setminus \{v\},i)| &=& d^i (1+o(\log^{-2} n)).
\end{eqnarray*}
Hence, a.a.s.\ there is no set $R$ without these expansion properties and so we may assume below that they all hold.
  
Fix any $R \subseteq V$ with $|R|=r$. Expose $N(R,i)$ and let $S = V \setminus N(R,i)$ be the set of vertices at distance at least $i+1$ from $R$ (note that no edge within $S$ and no edge between $S$ and $S(R,i)$ is exposed yet). From the previous observation we assume that  $|S|=(1+o(1))n$. 

Let $\I = \{ (x,y) : x, y \in S, \, x \neq y \}$, and for any $(x,y) \in \I$, let $A_{x,y}$ be the event (with the corresponding indicator random variable $I_{x,y}$) that $d_R(x)=d_R(y)$. Let $X = \sum_{(x,y) \in \I} I_{x,y}$. Clearly, the probability that $R$ is a resolving set is at most the probability that $X=0$. The associated \emph{dependency graph} has $\I$ as its vertex set, and $(x_1, y_1) \sim (x_2, y_2)$ if and only if $\{ x_1, y_1 \} \cap \{ x_2, y_2 \} \neq \emptyset$. It follows from Suen's inequality that
\begin{equation}\label{eq:Suen}
\Prob(X=0) \le \exp \left( - \mu + \Delta e^{2\delta} \right),
\end{equation}
where
\begin{eqnarray*}
\mu &=& \sum_{(x,y) \in \I} \Prob(A_{x,y})\\
\Delta &=& \sum_{ (x_1, y_1) \sim (x_2, y_2)} \E[I_{x_1,y_1} I_{x_2,y_2}]\\
\delta &=& \max_{(x_1,y_1) \in \I} \sum_{(x_2, y_2) \sim (x_1, y_1) } \Prob(A_{x_2, y_2}).
\end{eqnarray*}

We will first estimate $\mu$.  
For a given vector $d \in \{i+1,i+2\}^r$, let $R_{i+1} = R_{i+1}(d) \subseteq R$ be a set of vertices of $R$ that we want to be at distance exactly $i+1$ from $x$ and $y$ (recall that $x,y \notin N(R,i)$); $R_{i+2} = R_{i+2}(d) := R \setminus R_{i+1}$. Since we want to have a lower bound on the probability that $x$ and $y$ yield the same vector $d$, in order to avoid additional complications,  for every vertex $v \in R_{i+1}$ we can ignore possible edges between  $x$, $y$ and a vertex in  $S(v,i) \cap S(R \setminus \{v\},i)$ and consider only the possible edges between $x$, $y$ and $S(v,i) \setminus S(R \setminus \{v\},i)$. Let $p_{x,y}(d)$ be the probability that the distance from both $x$ and $y$  to $R_{i+1}$ is $i+1$ and to $R_{i+2}$ is \emph{at least} $i+2$. We have that 
\begin{eqnarray*}
p_{x,y}(d) &\ge& \left( (1-p)^{|S(R_{i+2},i)|} \prod_{v \in R_{i+1}} \left(1-(1-p)^{|S(v,i) \setminus S(R \setminus \{v\},i)|} \right) \right)^2\\
&\ge& (1+o(1)) \left( \exp \Big(- c |R_{i+2}| (1+o(\log^{-2} n)) \Big) \left( 1 - \exp \Big(- c (1+o(\log^{-2} n)) \Big)^{|R_{i+1}|}  \right)  \right)^2 \\
&=& (1+o(1)) \left( e^{-c} \right)^{2|R_{i+2}|} \left( 1 - e^{-c} \right)^{2|R_{i+1}|},
\end{eqnarray*}
since $c |R| = O(\log^2 n)$. (This is the place where we need to control error terms by concentrating on graphs that are dense enough.) 
Finally, it is straightforward to see that a.a.s.\ both $x$ and $y$ are adjacent to at least one vertex of $S(v,i+1)$ for every $v \in R$. (Recall that it follows from Lemma~\ref{lem:diameter} that the diameter of $G$ is $i+2$ a.a.s., and so that there are only two possible distances that occur in $d_R(x)$ for any $x \in S$.) Hence, $\Prob(d_R(x)=d_R(y)=d) = (1+o(1)) p_{x,y}(d)$.

Since there are ${r \choose k}$ vectors with exactly $k$ entries equal to $(i+2)$,
\begin{eqnarray*}
\mu &=& {n-o(n) \choose 2} \sum_{k=0}^r {r \choose k} (1+o(1)) \left( e^{-c} \right)^{2 k} \left( 1 - e^{-c} \right)^{2(r-k)} \\
&=& (1+o(1)) {n \choose 2} \left( (e^{-c})^2 + (1-e^{-c})^2 \right)^r = (1+o(1)) {n \choose 2} q^r.
\end{eqnarray*}
By the same calculations we have
\begin{eqnarray*}
\Delta &=& (1+o(1)) 3 {n \choose 3} \left( (e^{-c})^3 + (1-e^{-c})^3 \right)^r\\
\delta &=& (1+o(1)) 2 n \left( (e^{-c})^2 + (1-e^{-c})^2 \right)^r = (2+o(1)) n q^r.
\end{eqnarray*}

Now we are ready to apply Suen's inequality~(\ref{eq:Suen}). Since $q^r = n^{-2+\eps}$ and using that $1-x \le e^{-x}$, we get
\begin{eqnarray*}
\log \left( \Prob(X=0) \right) &\le& - (1+o(1)) {n \choose 2}  q^r \left( 1 - n \left( \frac {(e^{-c})^3 + (1-e^{-c})^3}{(e^{-c})^2 + (1-e^{-c})^2} \right)^r e^{O(nq^r)} \right) \\
&=& - (1+o(1)) \frac{n^{\eps}}{2} \left( 1 - n \left( 1 - \frac {(e^{-c}) - (e^{-c})^2}{(e^{-c})^2 + (1-e^{-c})^2} \right)^r e^{O(n^{\eps-1})} \right) \\
&\le& - \frac{n^{\eps}}{3} \left( 1 - n \left( 1 - \frac {1-q}{2q} \right)^r \right)\\
&\le& - \frac{n^{\eps}}{3} \left( 1 - n \exp \left( - \frac {1-q}{2q} \cdot \frac{(2-\eps)\log n}{\log (1/q)} \right) \right).
\end{eqnarray*}
Note that the function $f(q):=(q-1)/(q \log q)$ is decreasing in $(0,1)$ and tends to 1 as $q \to 1$. Since $1-q=2e^{-c}(1-e^{-c}) \ge (1+o(1)) 2 \eps$, or equivalently, $q \le 1-(1+o(1))2\eps$, the minimum is attained at  $1-q_0$ such that $q_0 = (2+o(1)) \eps$. Therefore, using the fact that for $-1 \le x < 1$, $\log(1-x)=x+x^2/2+O(x^3)$, we have
$$
\frac {1-q}{2q} \cdot \frac{2-\eps}{\log (1/q)} \ge \frac {q_0}{2(1-q_0)} \cdot \frac {2-\eps}{q_0 (1+q_0/2+O(q_0^2))} = 1+ \left( \frac {1}{2} + o(1) \right) \eps,
$$
which in turn implies that, say, $\Prob( X=0 ) \le \exp (- n^{\eps}/4)$. Finally, the expected number of resolving sets $R$ of size $r$ is at most
$$
{n \choose r} \exp \left( -\frac {n^{\eps}}{4} \right) \le \exp \left( O \left( \frac {\log^3 n}{\log \log n} \right) - \Omega \left( \log^3 n \right)\right) = o(1),
$$
and the result follows by Markov's inequality.

Now we point out the differences with the (slightly easier) case $i = 0$. In this case we have
\begin{eqnarray*}
\mu &=&  (1+o(1)) {n \choose 2} \left( p^2 + (1-p)^2 \right)^r = (1+o(1)) {n \choose 2} q^r.
\end{eqnarray*}
and 
\begin{eqnarray*}
\Delta &=& (1+o(1)) 3 {n \choose 3} \left( p^3 + (1-p)^3 \right)^r\\
\delta &=& (1+o(1)) 2 n \left( p^2 + (1-p)^2 \right)^r = (2+o(1)) n q^r.
\end{eqnarray*}
Then, again using $q^r = n^{-2+\eps}$,
\begin{eqnarray*}
\log \left( \Prob(X=0) \right) &\le& - (1+o(1)) {n \choose 2}  q^r \left( 1 - n \left( \frac {p^3 + (1-p)^3}{p^2 + (1-p)^2} \right)^r e^{O(nq^r)} \right) \\
&=& - (1+o(1)) \frac{n^{\eps}}{2} \left( 1 - n \left( 1 - \frac {p - p^2}{p^2 + (1-p)^2} \right)^r e^{O(n^{\eps-1})} \right) \\
&\le& - \frac{n^{\eps}}{3} \left( 1 - n \left( 1 - \frac {1-q}{2q} \right)^r \right)\\
&\le& - \frac{n^{\eps}}{3} \left( 1 - n \exp \left( - \frac {1-q}{2q} \cdot \frac{(2-\eps)\log n}{\log (1/q)} \right) \right).
\end{eqnarray*}
Now, by assumption, $p \le (1-\eps)$, and therefore $1-q=2p(1-p) \ge (1+o(1)) 2 \eps$. Hence, the minimum of $f(q)=(q-1)/(q \log q)$ is attained at  $1-q_0$ such that $q_0 = (2+o(1)) \eps$, and the remaining calculations can be performed as before.
\end{proof}

\bigskip

The next two theorems show that in all other cases we consider, the ratio between the upper and the lower bounds is at most $(2+o(1))$. We will assume until the end of this section that $d=o(n)$, as for $d=\Omega(n)$ Theorem~\ref{thm:lower_bound_Suen} can be applied.

\begin{theorem}\label{thm:lower_bound_diam}
Suppose that $\log n \ll d=p(n-1) = o(n)$.  Let $i \ge 1$ be the largest integer such that $d^i = o(n)$, let $c = c(n) = d^{i+1}/n$, and $\eta = \log_n d^i$. Suppose that $c - 2 \log n \to \infty$. Finally, let $G = (V,E) \in G(n,p)$. Then, a.a.s.
$$
\beta(G) \ge (\eta+o(1)) \frac {n \log n}{d^i}.
$$
\end{theorem}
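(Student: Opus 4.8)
The plan is to exploit the fact that in this regime the diameter equals $i+1$, so that vertices far from a resolving set are forced to collide. First I would invoke Lemma~\ref{lem:diameter}: since $d^{i+1}/n-2\log n=c-2\log n\to\infty$ by hypothesis and $d^i/n-2\log n\to-\infty$ (because $d^i=o(n)$), the diameter of $G$ is $i+1$ a.a.s. Now fix any $R\subseteq V$. Every vertex $u\in V\setminus N(R,i)$ is at distance strictly greater than $i$ from each vertex of $R$, hence --- the diameter being $i+1$ --- at distance \emph{exactly} $i+1$ from every vertex of $R$, so all such vertices share the single distance vector $(i+1,\dots,i+1)$ (note $R\subseteq N(R,i)$, and each $v\in R$ has a $0$ in its own vector, so there is no conflict with $R$). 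Consequently, if $R$ is resolving then $|V\setminus N(R,i)|\le 1$. It therefore suffices to prove that a.a.s.\ no set $R$ of size $r:=(1-o(1))\,n\log(d^i)/d^i$ satisfies $|N(R,i)|\ge n-1$; since $\eta\log n=\log d^i$ this gives $\beta(G)\ge(\eta-o(1))(n\log n)/d^i$, which is the claimed bound, and as the metric dimension is monotone under taking supersets, ruling out size exactly $r$ rules out all smaller sizes.

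For a fixed $R$ with $|R|=r$ I would bound $\Prob(|N(R,i)|\ge n-1)$ by a clean exposure argument. Reveal the breadth-first layers $S(R,0),\dots,S(R,i-1)$ of $R$; this leaves all edges between $S(R,i-1)$ and $W:=V\setminus N(R,i-1)$ unexamined. A vertex $u\in W$ lies in $S:=V\setminus N(R,i)$ if and only if it has no neighbour in $S(R,i-1)$, and these events are \emph{independent} across $u\in W$, each of probability $(1-p)^{|S(R,i-1)|}$. Applying Lemma~\ref{lem:gnp exp}(i) with $|V'|=1$ to every vertex gives a.a.s.\ $|N(v,i-1)|\le(1+o(1))d^{i-1}$ for all $v$, hence $|S(R,i-1)|\le|N(R,i-1)|\le(1+o(1))r\,d^{i-1}=o(n)$ for every $R$; thus $|W|=(1-o(1))n$ and, writing $\bar k:=r\,d^i/n$, each probability is at least $e^{-(1+o(1))\bar k}$. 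Therefore $|S|$ stochastically dominates a $\mathrm{Bin}\big((1-o(1))n,\,e^{-(1+o(1))\bar k}\big)$ variable, and its lower tail yields
\[
\Prob\big(|S|\le 1\big)\le\exp\!\Big(-(1-o(1))\,n\,e^{-(1+o(1))\bar k}\Big).
\]

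Finally I would take the union bound over the at most $n^r=e^{r\log n}$ choices of $R$ (adding the $o(1)$ probability that the single-vertex expansion fails). The estimate succeeds provided $r\log n\ll n\,e^{-(1+o(1))\bar k}$, which after substituting $\bar k=r\,d^i/n$ reduces to $(1+o(1))\bar k<\log d^i-\log\log n-\log\bar k$, and hence permits any $\bar k=(1-o(1))\log d^i$, i.e.\ $r=(1-o(1))n\log(d^i)/d^i$. The key structural input that makes the constant come out to the full $\eta$ (rather than something smaller) is the bound $\log d^i\ge(\tfrac12+o(1))\log n$ from the Remark: it guarantees $\log d^i\gg\log\log n$, so the correction terms $\log\log n+\log\bar k$ are negligible against $\log d^i$, and it ensures $(d^i)^{\delta}\gg\mathrm{polylog}(n)$ for the slowly vanishing $\delta$ one is forced to leave in the exponent. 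The main obstacle is precisely this last bookkeeping: threading all the $(1\pm o(1))$ error terms --- from the expansion lemma, from $p$ versus $d/n$, and from the union-bound margin --- so tightly that the effective exponent $(1+o(1))\bar k$ can be pushed up to $(1-o(1))\log d^i$ while still beating $r\log n$. The conditional-independence exposure of the last layer is what makes the tail bound clean enough for this to be feasible.
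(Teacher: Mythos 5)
Your proposal is correct and is essentially the paper's own proof: both rest on Lemma~\ref{lem:diameter} forcing diameter $i+1$ (so all vertices outside $N(R,i)$ share the vector $(i+1,\dots,i+1)$), Lemma~\ref{lem:gnp exp}(i) giving $|N(R,i-1)| \le (1+o(1))rd^{i-1} = o(n)$, a last-layer exposure argument bounding the probability that at most one vertex avoids $N(R,i)$, and a union bound over $n^r$ sets whose margin is exactly your $\eps \gg (\log\log n)/(\log n)$ condition. The only differences are cosmetic: the paper organizes the argument as four ``bins'' and bounds the survival event directly by $n\left(1-(1-p)^{|N(R,i-1)|}\right)^{n(1-O(1/\omega))}$ rather than via binomial stochastic domination, and it carries out the error-term bookkeeping you correctly flag by fixing an explicit $\eps \to 0$ dominating $\omega^{-1/2}$, $d/n$, and $(\log\log n)/(\log n)$.
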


\begin{proof}
Put $\omega = \omega(n) := d/(\log n) \to \infty$, and let $\eps = \eps(n) > 0$ be any function tending (slowly) to zero such that $\omega^{-1/2} = o(\eps)$, $d/n = o(\eps)$, and $\eps \gg (\log \log n)/(\log n)$. We will show that a.a.s.\ no $R$ of cardinality $r = (\eta-\eps) (n \log n)/d^i$ is a resolving set.

The general approach is similar to the previous proof. As before, we are going to test all sets $R \subseteq V$ with $|R|=r$ and partition them into 4 bins.\\
-- \emph{Bin 1} contains sets with $|N(R,i-1)| > (\eta-\eps/2) \frac {n \log n}{d}$;\\
-- \emph{Bin 2} contains sets with $|N(R,i-1)| \le (\eta-\eps/2) \frac {n \log n}{d}$ and there is at most one vertex at distance at least $i+1$ from $R$;\\
-- \emph{Bin 3} contains sets with $|N(R,i-1)| \le (\eta-\eps/2) \frac {n \log n}{d}$, there are at least two vertices at distance at least $i+1$ from $R$, and there is at least one vertex at distance at least $i+2$ from $R$;\\
-- \emph{Bin 4} contains all remaining sets.

The observation is that when all sets under consideration are in Bin 4, then there is no resolving set of cardinality $r$. Indeed, if this property holds, then for every set $R$ with $|R|=r$ there are at least two vertices at distance exactly $i+1$ from \emph{every} vertex from $R$ and hence they cannot be distinguished by $R$. 

It follows from Lemma~\ref{lem:gnp exp}(i) (after applying it $r$ times, for each vertex of $R$) that a.a.s.\ for all $R \subseteq V$ with $|R|=r$ we have
$$
|N(R,i-1)| \le r d^{i-1} (1+O(\omega^{-1/2})) \le (\eta-\eps/2) \frac {n \log n}{d} = O \left( \frac {n}{\omega} \right).
$$
Hence, a.a.s.\ Bin 1 is empty. Similarly, it follows from Lemma~\ref{lem:diameter} that the diameter of $G$ is $i+1$ a.a.s. so Bin 3 is empty a.a.s. It remains to show that a.a.s.\ Bin 2 is empty.

Fix any $R \subseteq V$ with $|R|=r$ and perform BFS until $N(R,i-1)$ is discovered. Since here we count sets with good expansion properties, we may assume that $|N(R,i-1)| \le (\eta-\eps/2) \frac {n \log n}{d}$. Now, we are going to estimate the probability that there are at least two vertices in $V \setminus N(R,i-1)$ that are not adjacent to any vertex in $N(R,i-1)$. 
Noticing that $d/n=o(\eps)$ and using $1-x \le e^{-x}$, we get that the probability that at most one vertex is not adjacent to $N(R,i-1)$ (we have at most $n$ choices for this vertex) is at most 
\begin{align*}
n &\left( 1 - (1-p)^{|N(R,i-1)|} \right)^{n(1-O(1/\omega))} \\
&\le n \left( 1 - (1-p)^{(\eta-\eps/2) \frac {n \log n}{d}} \right)^{n(1-O(1/\omega))} \\
&= n \left( 1 - \exp \Big(-(\eta-\eps/2+o(\eps)) \log n \Big) \right)^{n(1-O(1/\omega))} \\
&\le n \left( 1 - \exp \Big(-(\eta-\eps/3) \log n \Big) \right)^{n(1-O(1/\omega))} \\
&= \exp \left( \log n - n^{1-\eta+\eps/3}(1-o(1)) \right) \\
&= \exp \left( \log n - n^{1-\eta+\eps/4} n^{\eps/12} (1-o(1)) \right) \\
&\le \exp \left( - n^{1-\eta+\eps/4} \right).
\end{align*}
 (The last line follows since $\eps \gg (\log \log n)/(\log n)$ implies that 
$$
n^{\eps/12}(1-o(1)) > \exp \big( (\eps/12) \log n - 1 \big) > \exp(2 \log \log n) = \log^2 n,
$$
and so replacing $\eps/3$ by $\eps/4$ is enough to make the additive $\log n$-term to be negligible.)

Hence, the probability that $R$ belongs to Bin 2 is at most $\exp \left( - n^{1-\eta+\eps/4} \right)$. On the other hand, the number of possible choices for $R$ is equal to 
$$
{n \choose r} \le n^{(\eta-\eps) \frac {n \log n}{d^i}} \le \exp \left( \frac {n (\log n)^2}{d^i}\right) \le \exp \left( n^{1-\eta+(2\log \log n)/(\log n)} \right),
$$
and so Bin 2 is empty a.a.s.\ after applying the union bound. The result follows.
\end{proof}

\bigskip

The next theorem deals with slightly smaller values of $c$. Since the diameter might change from $i+1$ to $i+2$ in this situation, a more careful treatment is required.

\begin{theorem}\label{thm:lower_bound_diam2}
Suppose that $\log^3 n \ll d=p(n-1) = o(n)$.  Let $i \ge 1$ be the largest integer such that $d^i = o(n)$, and let $c = c(n) = d^{i+1}/n$. Suppose that $e^{c} > (\log n)/(3 \log \log n)$ (in particular, $c \to \infty$) and $c \le 3 \log n$. Finally, let $G = (V,E) \in G(n,p)$. Then, a.a.s.
$$
\beta(G) \ge \left( \frac{i}{i+1} + o(1) \right) \left( \frac {d^i}{n} + e^{-c} \right)^{-1} (\log n).
$$
\end{theorem}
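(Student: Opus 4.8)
The plan is to follow the bin-based union-bound strategy of Theorem~\ref{thm:lower_bound_diam}, refined to cope with the fact that far vertices may now lie at distance $i+1$ \emph{or} $i+2$ from the vertices of $R$. Put $\eta=\log_n d^i$ (so $\eta=\tfrac{i}{i+1}+o(1)$ here, as in the Remark, because $c\le 3\log n$), fix a slowly decaying $\eps=\eps(n)\gg(\log\log n)/(\log n)$, and set $r=(\eta-\eps)\left(d^i/n+e^{-c}\right)^{-1}\log n$; it suffices to show that a.a.s.\ no $R$ with $|R|=r$ is resolving. I would prove that a.a.s., for \emph{every} such $R$, there are at least two vertices lying at distance \emph{exactly} $i+1$ from every vertex of $R$. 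Two such vertices share the constant vector $(i+1,\dots,i+1)$, hence are not resolved by $R$, and this works whether the diameter is $i+1$ or $i+2$. When the diameter is $i+1$ this is automatic as soon as there are two far vertices (recovering Theorem~\ref{thm:lower_bound_diam}); the new work is the diameter $i+2$ case, where being far no longer forces all coordinates to equal $i+1$.

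For a fixed $R$ I would expose $N(R,i)$ by breadth-first search; this reveals the far set $W=V\setminus N(R,i)$ and the spheres $S(v,i)$, while leaving the edges from $W$ to $S(R,i)$ unexposed. Lemma~\ref{lem:gnp exp}(i), applied to each vertex of $R$, gives $|N(R,i-1)|\le(1+o(1))\,r\,d^{i-1}$ uniformly (this empties Bin~1), so a vertex lands in $W$ with probability $(1-p)^{|N(R,i-1)|}$ and $\E|W|\ge n\exp(-(1+o(1))\,rd^i/n)$, which concentrates by Chernoff. Conditionally on the BFS, the event ``$x$ is adjacent to $S(v,i)$ for every $v\in R$'' depends only on the fresh edges out of $x$, so these events are independent across $x\in W$; since for $x\in W$ adjacency to $S(v,i)$ means adjacency to $S(v,i)\setminus N(R,i-1)$, the Harris/FKG inequality gives probability at least $\prod_{v}\bigl(1-(1-p)^{|S(v,i)\setminus N(R,i-1)|}\bigr)\ge\exp(-(1+o(1))\,r e^{-c})$. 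Writing $Y$ for the number of these ``super-good'' vertices, the two exponents add:
\[
\E Y \;\ge\; n\exp\!\Big(-(1+o(1))\,r\big(\tfrac{d^i}{n}+e^{-c}\big)\Big)\;=\;n^{\,1-\eta+\eps-o(1)},
\]
as $r(d^i/n+e^{-c})=(\eta-\eps)\log n$. A Chernoff bound then yields $\Prob(Y\le 1)\le\exp(-\Omega(n^{1-\eta+\eps-o(1)}))$, and since $\binom{n}{r}\le\exp(r\log n)=\exp(n^{1-\eta+o(1)})$, the slack $\eps\gg(\log\log n)/(\log n)$ makes the union bound over all $R$ go through; letting $\eps\to 0$ gives $\beta(G)\ge(\tfrac{i}{i+1}+o(1))(d^i/n+e^{-c})^{-1}\log n$.

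The step I expect to be the real obstacle is justifying $|S(v,i)\setminus N(R,i-1)|=(1+o(1))d^i$ — i.e.\ that $N(R,i-1)$ erodes only a negligible part of each sphere — \emph{uniformly over all} $R$ with $|R|=r$. The exponent $r e^{-c}$ is delicate, and a constant-factor error in a single $|S(v,i)\setminus N(R,i-1)|$ would corrupt it; yet here $r$ can be as large as $\Theta(d)$, far outside the range $r\le(\log^2 n)/(\log\log n)$ of Lemma~\ref{lem:gnp exp}(ii), so that lemma cannot be invoked and the crude per-sphere bound is too lossy. The way around it is to control the \emph{aggregate} erosion $\sum_{v\in R}|S(v,i)\cap N(R,i-1)|$ rather than each term: using $|S(v,i)\cap N(R,i-1)|\le|N(R\setminus\{v\},i-1)|$ (since $S(v,i)$ is disjoint from $N(v,i-1)$) together with a first-moment count over $r$-sets — the higher-$i$ analogue of the internal-edge estimate that for $i=1$ follows from $N(R,i-1)=R$ and the fact that no $r$-set of $G(n,p)$ is abnormally dense — one shows that only $o(r)$ spheres can be eroded by more than an $o(1)$ fraction and that the total contribution of the exceptional $v$ to $\sum_v e^{-p|S(v,i)\setminus N(R,i-1)|}$ is $o(re^{-c})$. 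Lemma~\ref{lem:diameter} enters only as bookkeeping, to confirm that in this range the diameter is $i+1$ or $i+2$, so that far vertices have all coordinates in $\{i+1,i+2\}$; the ``two vertices at distance exactly $i+1$'' construction then closes both cases at once.
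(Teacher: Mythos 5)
Your proposal follows essentially the same route as the paper's proof: the same target structure (a.a.s.\ \emph{every} $R$ of size $r$ admits two vertices at distance exactly $i+1$ from every vertex of $R$), the same exposure of $N(R,i-1)$ with the factorization into $p_\ell$ (being far) and $p_v$ (all coordinates $i+1$) via positive correlation, and the same Chernoff-plus-union-bound finish. Two differences are worth recording. First, you use a single unified radius $r=(\eta-\eps)\left(d^i/n+e^{-c}\right)^{-1}\log n$, whereas the paper splits into the cases $e^{-c}n=Ad^i$ with $A=\Omega(1)$ (taking $r\approx \frac{i}{i+1}\frac{A}{A+1}e^c\log n$) and $e^{-c}n\ll d^i$ (taking $r\approx\frac{i}{i+1}\frac{n\log n}{d^i}$); since $\frac{i}{i+1}\bigl(d^i/n+e^{-c}\bigr)^{-1}=\frac{i}{i+1}\frac{A}{A+1}e^c$ in the first case, the two choices coincide, and your version is a genuine small simplification (it even makes the paper's extra requirement $\omega^{-1}\ge e^{-c}n/d^i$ automatic) — but your slack must dominate the concentration errors, so you need $\eps\gg\omega^{-1/2}$ with $\omega=\min\{(d/\log^3 n)^{1/2},(\log n)^{1/2}\}$, not merely $\eps\gg(\log\log n)/(\log n)$. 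Second, the point you flag as ``the real obstacle'' — uniform control of $|S(x,i)\setminus N(R,i-1)|$ over all $\binom{n}{r}$ sets, with $r$ far beyond the range of Lemma~\ref{lem:gnp exp}(ii) — is precisely where the paper is lightest: it invokes positive correlation of the events $A_x$ and plugs in the full sphere sizes $|S(x,i)|=(1+O(\omega^{-1}\log^{-1}n))d^i$ from Lemma~\ref{lem:gnp exp}(i), absorbing the erosion by $N(R,i-1)$ without an explicit uniform estimate, while your conditional computation correctly isolates that the eroded sphere is what actually enters $p_v$. Your aggregate first-moment patch has the right flavor (for $i=1$ it amounts to bounding internal edges of $r$-sets and capping the number of $\delta$-eroded spheres, which does close the exponent), but as written it is a sketch rather than a proof, and your pointwise bound $|S(v,i)\cap N(R,i-1)|\le|N(R\setminus\{v\},i-1)|$ is indeed vacuous in the corner $A=\Theta(1)$, $c=(1+o(1))\log d$, where $rd^{i-1}\gg d^i$, as you acknowledge. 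Finally, you are right that Lemma~\ref{lem:diameter} is only bookkeeping here: two vertices at distance exactly $i+1$ from all of $R$ are unresolved whatever the diameter is, and the paper uses the diameter statement only as motivation for adjusting the proof of Theorem~\ref{thm:lower_bound_diam}.
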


\begin{proof}
The proof is similar to the one used to prove Theorem~\ref{thm:lower_bound_diam}. This time the diameter is a.a.s.\ at most $i+2$ by Lemma~\ref{lem:diameter} (in fact, it is a.a.s.\ equal to $i+2$, provided that $c-2\log n \to -\infty$) so the proof has to be slightly adjusted. However, we will show that a.a.s.\ for every set $R$ of the desired cardinality there are at least two vertices at distance $i+1$ from \emph{every} vertex of $R$ (it is clear that these vertices cannot be distinguished by $R$).

Let $\omega = \omega(n) = \min\{ (d/\log^3 n)^{1/2}, (\log n)^{1/2} \} \to \infty$. We will consider two cases independently. 

\emph{Case 1}: Suppose first that $e^{-c}n = A d^i$ for some $A=A(n)=\Omega(1)$ ($A$ might tend to infinity); in particular, $c \le \log n$. Fix $\eta > 0$ and take any $R \subseteq V$ with 
$$
|R|=r := \left( \eta - \omega^{-1/2} \right) e^c (\log n) = (1+o(1)) \eta e^c (\log n).
$$
As before, (based on Lemma~\ref{lem:gnp exp}(i) applied independently $r$ times, since $d \gg \log^3 n$ and $d^{i-1}/n = c / d^2 = o(\log^{-1} n)$), we may assume that 
$$
|N(R,i-1)| \le (1+o(\log^{-1} n)) rd^{i-1} = \left( \eta - (1+o(1)) \omega^{-1/2} \right) (n \log n)/(Ad) = o(n),
$$
since a.a.s.\ there is no $R$ that violates this condition. Hence, after exposing edges from $S(R,i-1)$ to $S(R,i)$, we get that the probability that a vertex outside of  $N(R,i-1)$ is not adjacent to any vertex in $S(R,i-1)$ is 
\begin{eqnarray*}
p_\ell &=& (1-p)^{|S(R,i-1)|} \\
&\ge& \exp \left( - \left( \eta - (1+o(1)) \omega^{-1/2} \right) A^{-1} (\log n)(1+O(d/n)) \right) \\
&=& \exp \left( - \left( \eta - (1+o(1)) \omega^{-1/2} \right) A^{-1} (\log n) \right) \\ 
&=& \exp \left( \left( - \eta/A + (1+o(1)) \omega^{-1/2}/A \right) (\log n)  \right),
\end{eqnarray*}
where the third line follows from the fact that $d/n = c/d^i \le c/d \ll \log^{-2}n$, and hence the term $\exp(O((\log n) d/n))=\exp(o(1)) = (1+o(1))$, and it is thus absorbed in the leading factor $(1+o(1)).$ 

Now, fix any $v \notin N(R,i)$. We will show that the probability of having all coordinates equal to $i+1$ is large enough and so with high probability there are at least two such vertices, which implies that $R$ is not a resolving set. For $x \in R$, let $A_x=A_x(v)$ be the event that $v \in S(x,i+1)$. As before, it follows from Lemma~\ref{lem:gnp exp}(i) that we may assume that $|S(x,i)|=(1+O( \omega^{-1} \log^{-1}n)) d^i$ (recall that $\omega \le (d/\log^3 n)^{1/2}$, and thus the first and more important error term of Lemma~\ref{lem:gnp exp}(i) is bounded by $O(1/\left( \omega (\log n)^{(3/2)} \right)$). Since $rd^{i} = (1+o(1)) \eta n \log n / A$ could be of order at least $n$, it may happen that $S(x,i)$ overlaps with neighbourhoods of other vertices of $R$. However, this actually helps, since the $A_x$'s ($x \in R$) are positively correlated (the fact that there is at least one edge from $v$ to $S(x_1,i)$ for some $x_1 \in R$ increases the chances that there is at least one edge from $v$ to $S(x_2,i)$ for some $x_2 \in R \setminus \{x_1\}$). Hence, the probability that $v$ has all coordinates equal to $i+1$ is
\begin{eqnarray*}
p_v = \Prob \left( \bigcap_{x \in R} A_x \right) &\ge& \left( 1- (1-p)^{(1+O(\omega^{-1} \log^{-1}n)) d^i} \right)^r \\
&=& \left( 1- e^{-c +O(\omega^{-1})} \right)^r \\
&=& \left( 1- (1+O(\omega^{-1})) e^{-c} \right)^r \\
&=&\exp \left( - (1+O(\omega^{-1})) e^{-c} r \right) \\
&=& \exp \left( - (\eta - (1+o(1)) \omega^{-1/2}) (\log n) \right),
\end{eqnarray*}
where the second line follows, since $e^{-c} \le (3 \log \log n)/(\log n)$,  and the second last equation follows since $O((e^{-c})^2 r)=O(e^{-c} r (\log \log n) / (\log n))=O(\omega^{-1}e^{-c}r)$, and thus the quadratic term $O(x^2 r)$ coming from the approximation $(1-x)^r=\exp(-xr+O(x^2 r))$ is already absorbed  in the given error term. 

Note that $p_v$ is independent of $p_{\ell}$, since for the bound on $p_v$ only previously unexposed edges between vertices from $S(R,i)$ and vertices not in $N(R,i)$ are taken into account. Hence, the expected number of vertices with all coordinates equal to $i+1$ is 
\begin{eqnarray*}
(1+o(1)) n p_\ell p_v &\ge& (1+o(1))\exp \Big( (1-\eta(1+1/A) + (1+o(1)) \omega^{-1/2} (1+1/A) ) (\log n) \Big) \\
&\ge& (1+o(1))n^{1-\eta \frac{A+1}{A}}\exp(\log^{3/4} n) \\
&\ge& n^{1- \eta \frac {A+1}{A}} (\log n)^4,
\end{eqnarray*}
where we used that $\omega \le (\log n)^{1/2}$ and $(1+o(1))\exp(\log^{3/4} n) \ge \log^4 n$. 
It follows from Chernoff's bound~(\ref{chern}) that with probability at most $\exp (- n^{1- \eta \frac {A+1}{A}} (\log n)^3)$ there are less than two such vertices, and hence this is also an upper bound for the probability that a given $R$ is a resolving set. Put $\eta = (\frac {i}{i+1}) (\frac {A}{A+1})$. We get that a.a.s.\ there is no resolving set of size $r$ by the union bound, since the number of sets of cardinality $r$ is at most
$$
n^r = \exp( r \log n) \le \exp( \eta e^c (\log n)^2) = \exp (O(n (\log n)^2 / d^i)) = \exp( O(n^{1-i/(i+1)} (\log n)^2) ),
$$
where the last equality follows since $d^i = \Omega(n^{\frac{i}{i+1}})$. 

\emph{Case 2}: Using exactly the same argument, one can deal with the case $e^{-c}n \ll d^i$ (but still it is assumed that $c \le 3 \log n$). We only point out the differences comparing to the previous case. This time we take
$$
r := \left( \eta - \omega^{-1/2} \right) \frac {n \log n}{d^i} = (1+o(1)) \eta \frac {n \log n}{d^i},
$$
and, using the same calculations as before, we may assume that
 $$|N(R,i-1)| \le \left(\eta - (1+o(1)) \omega^{-1/2} \right) (n \log n)/d,$$
and hence we obtain
$$
p_\ell \ge \exp \left( \left( - \eta + (1+o(1)) \omega^{-1/2} \right) (\log n) \right).
$$
Arguing as before, we obtain
$$
p_v \ge \left( 1- (1+O(\omega^{-1})) e^{-c} \right)^r = \exp \left( - O \left( \frac{e^{-c}n}{d^i} \right) (\log n) \right) \ge \exp ( - O(\omega^{-1} \log n) ),
$$
assuming additionally that $\omega^{-1} \ge \frac{e^{-c}n}{d^i}$, which we may, since $\frac{e^{-c}n}{d^i} = o(1)$; that is, $\omega = \min\{ (d/\log^3 n)^{1/2}, (\log n)^{1/2}, d^i e^c / n \} \to \infty$. As before, we conclude that 
$$
(1+o(1)) n p_\ell p_v \ge n^{1- \eta} (\log n)^4,
$$ 
and so the assertion holds for $\eta = \frac {i}{i+1}$, completing the proof of our theorem.
\end{proof}

\section{Acknowledgements} 

Some of the research was carried out while the first and the third authors were visiting the Theory Group at Microsoft Research, Redmond; we are grateful to Yuval Peres and other members of the group for their hospitality.

We also would like to thank the anonymous referee for suggestions which improved the exposition of our results.


\begin{thebibliography}{99}

\bibitem{Babai80} L.\ Babai, P.\ Erd\H{o}s, S.M.\ Selkow, Random graphs isomorphism, \emph{SIAM Journal on Computing} \textbf{9} (3) (1980), 628--635. 

\bibitem{Bailey11} R.F.\ Bailey, P.J.\ Cameron, Base size, metric dimension and other invariants of groups and graphs, \emph{Bulletin London Math. Society} \textbf{43} (2) (2011), 209--242.

\bibitem{Beerliova} Z.\ Beerliova, F.\ Eberhard, T.\ Erlebach, A.\ Hall, M.\ Hoffmann, M.\ Mih\'{a}lak, S.L.\ Ram, Network discovery and verification, \emph{IEEE J. Sel. Areas Commun}, \textbf{24} (12) (2006), 2168--2181.

\bibitem{bol} B.\ Bollob\'{a}s, \emph{Random Graphs}, Cambridge University Press, Cambridge, 2001.

\bibitem{bol_diameter} B.\ Bollob\'{a}s, The diameter of random graphs, \emph{Trans.\ Amer.\ Math.\ Soc.}\ \textbf{267} (1981), 41--52.

\bibitem{CGPS10} J.\ C\'{a}ceres, D.\ Garijo, M.\ Puertas, C.\ Seara, On the determining number and the metric dimension of graphs, \emph{Electronic Journal of Combinatorics} \textbf{17} 1 (2010), R63.

\bibitem{Cac07} J.\ C\'{a}ceres, C.\ Hernando, M.\ Mora, I.\ Pelayo, M.\ Puertas, C.\ Seara, D.\ Wood, On the metric dimension of cartesian products of graphs, \emph{SIAM Journal on Discrete Mathematics} \textbf{21} (2) (2007), 423--441.

\bibitem{CEJO00} G.\ Chartrand, L.\ Eroh, M.A.\ Johnson, O.R.\ Oellermann, Resolvability in graphs and the metric dimension of a graph, \emph{Discrete Applied Math.} \textbf{105} 1-3 (2000), 99--113.

\bibitem{Diaz12} J.\ D\'{i}az, O.\ Pottonen, M.\ Serna, E. J.\ van Leeuwen, On the Complexity of Metric Dimension,  In: ESA 2012, 419--430.

\bibitem{Foucaud12} F.\ Foucaud, G.\ Perarnau, Bounds for identifying codes in terms of degree parameters, \emph{Electronic Journal of Combinatorics} \textbf{19} (2012), P32.

\bibitem{Frieze07} A.\ Frieze, R.\ Martin, J.\ Moncel, M.\ Ruszink\'{o}, C. Smyth, Codes identifying sets of vertices in random networks, \emph{Discrete Mathematics} \textbf{307} (9-10) (2007), 1094--1107.

\bibitem{GJ79} M.R.\ Garey, D.S.\ Johnson, \emph{Computers and Intractability: A Guide to the Theory of NP-Completeness}, Freeman, 1979.

\bibitem{Har76} F.\ Harary, R. A.\ Melter, The metric dimension of a graph, \emph{Ars Combinatoria} \textbf{2} (1976), 191--195.

\bibitem{Hartung12} S.\ Hartung, A.\ Nichterlein, On the Parameterized and Approximation Hardness of Metric Dimension, In: proceedings of the 28th IEEE Conference on Computational Complexity (CCC '13), 2013.

\bibitem{Hauptmann} M.\ Hauptmann, R.\ Schmied, C.\ Viehmann, Approximation complexity of the Metric Dimension problem, \emph{J. Discrete Algorithms} \textbf{14} (2012), 214--222.

\bibitem{HMPSW10} C.\ Hernando, M.\ Mora, I.\ Pelayo, C.\ Seara, D.\ Wood, Extremal Graph Theory for Metric Dimension and Diameter, \emph{Electronic Journal of Combinatorics} \textbf{17} 1 (2010), R30.

\bibitem{Janson_Suen} S.\ Janson, New versions of Suen's correlation inequality, \emph{Random Structures \& Algorithms} \textbf{13} (1998), 467--483.

\bibitem{JLR} S.\ Janson, T.\ {\L}uczak, A.\ Ruci\'nski, \emph{Random Graphs}, Wiley, New York, 2000.

\bibitem{Kul96} S.\ Khuller, B.\ Raghavachari, A.\ Rosenfeld, Landmarks in Graphs, \emph{Discrete Applied Math.} \textbf{70} (1996), 217--229.

\bibitem{Mueller09} T.\ M\"{u}ller, J.-S.\ Sereni, Identifying and locating-dominating codes in (random) geometric networks, \emph{Combinatorics, Probability and Computing} \textbf{18} (6) (2009), 925--952.

\bibitem{Sla75} P.\ Slater, Leaves of trees, \emph{Congressus Numerantium} \textbf{14} (1975), 549--559.

\bibitem{Suen} S.\ Suen, A correlation inequality and a poisson limit theorem for nonoverlapping balanced subgraphs of a random graph, \emph{Random Structures \& Algorithms} \textbf{1} (1990), 231--242.

\end{thebibliography}
\end{document}